\numberwithin{equation}{section}
\theoremstyle{plain}
\newtheorem{Proposition}[equation]{Proposition}
\newtheorem{Corollary}[equation]{Corollary}
\newtheorem*{Corollary*}{Corollary}
\newtheorem{Theorem}[equation]{Theorem}
\newtheorem*{Theorem*}{Theorem}
\newtheorem{Lemma}[equation]{Lemma}
\theoremstyle{definition}
\newtheorem{Example}[equation]{Example}
\newtheorem{Remark}[equation]{Remark}
\def\D{\mathbb{D}}
\def\T{\mathbb{T}}
\def\K{\mathcal{K}}
\def\phi{\varphi}
\newcommand{\beqa}{\begin{eqnarray*}}
\newcommand{\eeqa}{\end{eqnarray*}}
\renewcommand{\geq}{\geqslant}
\renewcommand{\subset}{\subseteq}
\title[Dual of the compressed shift]{The dual of the compressed shift}
\author[Camara]{M. C. C\^{a}mara}
\address{Departamento de Matematica, Instituto Superior Tecnico, 1049-001 Lisboa, Portugal}
\email{cristina.camara@tecnico.ulisboa.pt}
\author[Ross]{W. T. Ross}
	\address{Department of Mathematics and Computer Science, University of Richmond, Richmond, VA 23173, USA}
	\email{wross@richmond.edu}
\keywords{Inner functions, Hardy spaces, compressed shift, unitary equivalence, similarity, invariant subspaces}
\subjclass[2010]{47A15, 47A65, 30D55, 30J05}
\thanks{This work was partially supported by FTC/Portugal through the grant UID/MAT/04459/2019. The second author would like to thank the Center for Mathematical Analysis, Geometry, and Dynamical Systems for their hospitality where the initial research for this paper was done.}
\begin{document}

\begin{abstract}
For an inner function $u$ we discuss the dual operator for the compressed shift $P_u S|_{\K_u}$, where $\K_u$ is the model space for $u$. We describe the unitary equivalence/similarity classes for these duals as well as their invariant subspaces.
\end{abstract}

\maketitle

\section{Introduction}

This paper deals with the unitary equivalence classes and the invariant subspaces of the dual operators for the well-known compressed shift operator on a model space. The main tool to explore these results is to connect these dual operators to the bilateral shift  on $L^2$ as well as a direct sum of the unilateral shift and its adjoint.

For an inner function $u$ on  $\D := \{|z| < 1\}$, consider the {\em model space} \cite{MR3526203}
$$\K_{u} := H^{2} \cap (u H^2)^{\perp},$$
where $H^2$ is the Hardy space \cite{Duren}. By Beurling's theorem, the subspaces $u H^2$ are the non-zero invariant subspaces of the shift operator 
\begin{equation}\label{shift}
(S f)(z) = z f(z)
\end{equation} on $H^2$ and thus, via annihilators, the spaces $\K_u$ are the non-trivial $S^{*}$-invariant subspaces of $H^2$. The operator $S^{*}$ can  be realized as the backward shift 
\begin{equation}\label{bkshirt88899}
(S^{*} f)(z) = \frac{f(z) - f(0)}{z}.
\end{equation} 

As $H^2$ is a closed subspace of $L^2(\T, d \theta/2\pi)$, one denotes $P_{u}$ to be the orthogonal projection of $L^2$ onto $\K_u$. The operator 
$$S_{u}:= P_{u} S|_{\K_{u}},$$ is called the {\em compressed shift} and plays an important role in operator theory \cite[p.~195]{MR3526203}. 

Related to $S_{u}$ are the {\em truncated Toeplitz operators} 
$A^{u}_{\phi} := P_{u} M_{\phi}|_{\K_{u}},$
where $\phi \in L^{\infty}$ and 
$M_{\phi} f= \phi f$ is multiplication by $\phi$ on $L^2$. Note that $A_{z}^{u} = S_{u}$. These truncated Toeplitz operators  have received considerable attention since their initial introduction in \cite{MR2363975} (see also \cite{MR3589670, MR3052299}). 

The recent papers \cite{MCKB, MR3759573, BHal, Mbbhbbh} began an interesting study of  the {\em dual truncated Toeplitz operators} $D^{u}_{\phi}$, $\phi \in L^{\infty}$, defined on 
$\K_{u}^{\perp}$
by 
$$
D_{\phi}^{u} := (I - P_{u}) M_{\phi}|_{\K_{u}^{\perp}}.
$$
Notice that $I - P_u$ is the orthogonal projection of $L^2$ onto $\K_{u}^{\perp}$. 
Decomposing $L^2$ as 
$L^{2} = \K_{u} \oplus \K_{u}^{\perp},$ one can think of  $A_{\phi}^{u}$ and its associated dual $D_{\phi}^{u}$ as parts of the multiplication operator 
$$M_{\phi}: L^2 = \K_{u} \oplus \K_{u}^{\perp} \to L^2, \quad M_{\phi} f = \phi \cdot f,$$ by means of its matrix decomposition 
\begin{equation}\label{6yhn5tgbwedc}
M_{\phi} = 
\begin{bmatrix}
A^{u}_{\phi} & \ast\\
\ast & D^{u}_{\phi}
\end{bmatrix}.
\end{equation}

In this paper, we focus on the {\em dual of the compressed shift} $S_{u}$, denoted by 
\begin{equation}\label{dedefineirne}
D_{u} := (I - P_{u}) S|_{\K_{u}^{\perp}}.
\end{equation}
By \eqref{6yhn5tgbwedc}, we can understand $D_u$ in terms of matrices as 
$$M := \begin{bmatrix}
S_{u} & \ast\\
\ast & D_{u}
\end{bmatrix},$$
where $M := M_z$ on $L^2$ and the matrix above is with respect to the orthogonal decomposition $L^2 = \K_u \oplus \K_{u}^{\perp}$. There are  other contexts of dual operators defined for  Toeplitz and subnormal operators \cite{MR3313402, MR3608059, MR617974, MR1885661} and thus these duals enjoy a tradition in operator theory. 

Along with a discussion of some basic properties of $D_u$, we will describe the $D_{u}$ invariant subspaces of $\K_{u}^{\perp}$ as well as the similarity and unitary equivalence properties of $D_u$ and $D_v$ for inner $u$ and $v$. We will show that when $u(0) = 0$, $D_u$ is unitarily equivalent to $S \oplus S^{*}$ on $H^2 \oplus H^2$, and thus $D_u$ and $D_v$ are unitarily equivalent whenever $u(0) = v(0) = 0$. When  $u(0) \not = 0$, $D_u$ turns out to be similar to $M$ on $L^2$, and thus $D_u$ is similar to $D_v$ whenever $u(0) \not = 0$, $v(0) \not = 0$.  Finally, we show that $D_{u}$ is unitarily equivalent to $D_v$ precisely when $|u(0)| = |v(0)|$. These results become important when describing  the invariant subspaces of $D_u$ (sections \ref{unotzero} and \ref{uzero}) and have connections to results from \cite{MR3816130} and \cite{CHI}. 
After this paper was completed, we learned of the paper \cite{CHI} which approaches the $D_{u}$-invariant subspaces of $\K_{u}^{\perp}$ in a different way. 
% as well as a description of $\sigma(D_u)$,  the spectrum of $D_u$. We will show that $\sigma(D_u) = \overline{\D}$ when $u(0) = 0$ while $\sigma(\D_u) = \T$ when $u(0) \not = 0$. 
\section{Some basics}

The space $L^{2} = L^{2}(\T, dm)$, where $\T$ is the unit circle and $m = d\theta/2\pi$ on $\T$, is a Hilbert space with inner product 
$\langle f, g\rangle := \langle f, g\rangle_{L^2}$. The Fourier coefficients of $f$ will be denoted by $\widehat{f}(j) = \langle f, \xi^{j}\rangle$. 
%Moreover, the set $\{z^{j}: j \in \Z\}$ is an orthonormal basis for $L^2$ and Parseval's theorem says that if $f, g \in L^2$ with Fourier series
%$$f \sim \sum_{j \in \Z} \widehat{f}(j) z^{j}, \quad g \sim \sum_{j \in \Z} \widehat{g}(j) z^{j},$$ 
%where 
%$$\widehat{f}(j) :=  \int_{\T}  f(z) \overline{z^{j}} dm(z), \quad j \in \Z,$$
%are the Fourier coefficients of $f$, 
%then 
%$$\langle f, g\rangle = \sum_{j \in \Z} \widehat{f}(j) \overline{\widehat{g}(j)}.$$
Viewing the Hardy space $H^2$ as 
$\{f \in L^2: \widehat{f}(n)  = 0 \;  \forall n < 0\}$
and 
$\overline{H^{2}_{0}}$ as $\{\overline{z f}: f \in H^2\},$
note that $L^2 = H^2 \oplus \overline{H^{2}_{0}}$.
%$$(H^2)^{\perp} = \overline{H^{2}_{0}}, \quad (\overline{H^{2}_{0}})^{\perp} = H^2.$$
Let $P_{_+}$ and $P_{-}$ denote the standard orthogonal projections from $L^2$ onto $H^2$ and $\overline{H_{0}^{2}}$ respectively.

For an inner function $u$, define the model space 
$\K_{u} = H^{2} \cap (u H^2)^{\perp}.$
Elementary facts about annihilators  will verify that 
$$\K_{u}^{\perp} = \overline{H^{2}_{0}} \oplus u H^2.$$
As $\K_{u}$ is a closed subspace of $L^2$, we have an orthogonal projection $P_{u}$ from $L^2$ onto $\K_{u}$. 
A result from  \cite[p.~124]{MR3526203} relates $P_u, I - P_u, P^{+}$, and $P^{-}$.

\begin{Lemma}\label{03r98egiodfsja}
If $u$ is inner, then 
$P_{u} = P^{+} - M_u P^{+} M_{\overline{u}} = M_{u} P^{-} M_{\overline{u}} P^{+}$
and 
$I - P_{u} = P^{-} + M_{u} P^{+} M_{\overline{u}}.$
\end{Lemma}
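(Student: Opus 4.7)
The plan is to identify $M_u P_+ M_{\overline{u}}$ as the orthogonal projection onto $uH^2$, and then use the orthogonal decomposition $L^2 = \K_u \oplus uH^2 \oplus \overline{H^2_0}$ to read off all three formulas.

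\textbf{Step 1 (key observation).} Since $|u|=1$ a.e.\ on $\T$, the multiplication operator $M_u$ is unitary on $L^2$ with $M_u^{\ast}=M_{\overline{u}}$, and $M_u(H^2)=uH^2$. A unitary conjugate of an orthogonal projection is again an orthogonal projection onto the image of the range, so
\[
M_u P_+ M_{\overline{u}} \;=\; \text{orthogonal projection of } L^2 \text{ onto } uH^2.
\]

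\textbf{Step 2 (first formula).} Since $\K_u = H^2 \ominus uH^2$, restricted to $H^2$ we have $P_+ = P_u + M_u P_+ M_{\overline{u}}$ as operators. To promote this to an identity on $L^2$, note that $uH^2 \subset H^2$ is orthogonal to $\overline{H^2_0}$, so $M_u P_+ M_{\overline{u}}$ annihilates $\overline{H^2_0}$; the same is of course true of $P_u$. Thus the identity extends to all of $L^2$ and yields
\[
P_u \;=\; P_+ - M_u P_+ M_{\overline{u}}.
\]
The formula for $I-P_u$ then follows by adding $P_- = I - P_+$ to both sides.

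\textbf{Step 3 (second formula --- the main obstacle).} The substantive point is to rewrite the ``upper half plane'' projection $P_+$ appearing in $M_u P_+ M_{\overline{u}}$ as something involving $P_-$ instead. First, since $\K_u \subset H^2$, we have $P_u = P_u P_+$. Next, using $P_+ = I - P_-$ on $L^2$, for any $h \in H^2$ we can write
\[
M_u P_+ M_{\overline{u}} h \;=\; M_u(I-P_-)M_{\overline{u}} h \;=\; M_u M_{\overline{u}} h - M_u P_- M_{\overline{u}} h \;=\; h - M_u P_- M_{\overline{u}} h,
\]
using $M_u M_{\overline{u}} = I$. Applying this with $h = P_+ f$ and combining with the first formula gives
\[
P_u f \;=\; P_+ f - M_u P_+ M_{\overline{u}} P_+ f \;=\; M_u P_- M_{\overline{u}} P_+ f,
\]
which is the remaining identity. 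The mildly delicate point here is keeping track of why it is legal to insert the extra $P_+$ on the right of $M_u P_+ M_{\overline{u}}$, which comes from Step 2 once one observes $M_u P_+ M_{\overline{u}}$ vanishes on $\overline{H^2_0}$.
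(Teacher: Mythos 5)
Your proof is correct. The paper does not actually prove this lemma --- it is quoted from \cite[p.~124]{MR3526203} without argument --- so there is nothing to compare against; your route (recognizing $M_u P^{+} M_{\overline{u}}$ as the orthogonal projection onto $uH^2$, using $L^2 = \overline{H^{2}_{0}} \oplus \K_u \oplus uH^2$ to get the first two identities, and then trading $P^{+}$ for $I - P^{-}$ together with $M_u P^{+} M_{\overline{u}} = M_u P^{+} M_{\overline{u}} P^{+}$ to get the third) is the standard one and all steps check out.
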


Any $f \in L^2 = H^{2} \oplus \overline{H^{2}_0}$ can be written uniquely as 
$$f = f_{+} + f_{-}, \quad f_{+} \in H^{2}, \quad f_{-} \in \overline{H^{2}_{0}},$$
that is, $f_{+} = P^{+} f$ and $f_{-} = P^{-} f$.

%Since $f_{+}$ can be regarded as an analytic function on $\D$,  we can express $P^{+}$ as an integral operator 
%\begin{equation}\label{ppPpppppPP}
%(P^{+} g)(z) = \int_{\T} \frac{1}{1 - \overline{\xi} z} g(\xi) \,dm(\xi), \quad g \in L^2, \quad z \in \D.
%\end{equation}
%In a similar way, $f_{-}$ can be regarded as a co-analytic function on $\D$ that vanishes at the origin and we can express $P^{-}$ as the integral operator 
%\begin{equation}\label{mMMmmMMmmM}
%(P^{-} g)(z) = \int_{\T} \frac{\xi \overline{z}}{1 - \xi \overline{z}} g(\xi) \,dm(\xi), \quad g \in L^2, \quad z \in \D.
%\end{equation}

 We will also use the notation 
\begin{equation}\label{zZxXvV}
\phi_f := \overline{z} \overline{f_{-}}, \quad f \in L^2.
\end{equation}
Observe that $\phi_{f} \in H^2$, and hence is analytic on $\D$, and so we can utilize the quantity $\phi_{f}(0)$. A Fourier series argument will show that 
\begin{equation}\label{ooopopoddGt666}
\phi_{f}(0) = \int_{\T} \overline{z}\overline{ f_{-}} \,dm =\overline{ \widehat{f_{-}}(-1)}.
\end{equation}

Any $f \in  \K_{u}^{\perp} = \overline{H^{2}_{0}} \oplus u H^2$ can be written uniquely as 
\begin{equation}\label{7765432qq}
f = f_{-} + u \widetilde{f_{+}}, \quad f_{-} \in \overline{H^{2}_{0}}, \quad \widetilde{f_{+}} \in H^2.
\end{equation}
 Lemma \ref{03r98egiodfsja} shows that 
$f_{-} = P^{-} f$ and $\widetilde{f_{+}} = P^{+}(\overline{u} f)$ and a Fourier series argument will verify the following identities. 

\begin{Lemma}\label{uUyyHHt}
For $f \in L^2$ we have
\begin{enumerate}
\item[(i)] $P^{-}(z f_{-}) = \overline{\phi_{f}} - \overline{\phi_{f}(0)}$;
\item[(ii)] $P^{-}(\overline{z} f_{+}) = f_{+}(0) \overline{z}$;
\item[(iii)] $P^{+}(\overline{z} f_{+}) = (f_{+} - f_{+}(0)) \overline{z}$;
\item[(iv)] $P^{+}(z f_{-}) = \overline{\phi_{f}(0)}$.
\end{enumerate}
\end{Lemma}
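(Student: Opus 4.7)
All four identities are pure Fourier-coefficient bookkeeping; the only substantive observation is the ``duality'' between $f_{-}$ and $\phi_{f}$ encoded in the definition \eqref{zZxXvV}. The strategy is to expand
\[
f_{+} = \sum_{n \geq 0} a_n z^n, \qquad f_{-} = \sum_{n \leq -1} a_n z^n,
\]
and recall that multiplication by $z$ (resp.\ $\overline{z}$) acts on these series by shifting indices up (resp.\ down) by one, while $P^{+}$ and $P^{-}$ simply truncate to indices $\geq 0$ and $\leq -1$ respectively. In addition, by \eqref{ooopopoddGt666}, the constant term $\phi_{f}(0)$ is precisely $\overline{a_{-1}} = \overline{\widehat{f_{-}}(-1)}$, and taking complex conjugates identifies $\overline{\phi_f}$ with $z f_{-}$ as elements of $L^{2}(\T)$.

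The easiest items are (ii) and (iii). Writing $\overline{z}f_{+} = a_{0}\,\overline{z} + \sum_{n \geq 1} a_{n} z^{n-1}$ and splitting into the parts with exponents $\leq -1$ and $\geq 0$ gives
\[
P^{-}(\overline{z}f_{+}) = a_{0}\,\overline{z} = f_{+}(0)\,\overline{z}, \qquad P^{+}(\overline{z}f_{+}) = \sum_{n \geq 1} a_{n} z^{n-1} = (f_{+} - f_{+}(0))\,\overline{z}.
\]

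For (iv) and (i), expand $zf_{-} = a_{-1} + \sum_{n \leq -1} a_{n-1} z^{n}$. The analytic part is the single constant $a_{-1} = \widehat{f_{-}}(-1) = \overline{\phi_{f}(0)}$, which gives (iv) at once. For (i), the anti-analytic part is $zf_{-} - a_{-1}$, and using $\overline{\phi_{f}} = zf_{-}$ together with $\overline{\phi_{f}(0)} = a_{-1}$ rewrites this as $\overline{\phi_{f}} - \overline{\phi_{f}(0)}$, which is the claim. The only point requiring a little care is tracking the conjugates in (i); beyond that, there is no real obstacle.
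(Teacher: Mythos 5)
Your proof is correct and is exactly the ``Fourier series argument'' the paper invokes without writing out: expand $f_{+}$ and $f_{-}$ in their Fourier series, note that multiplication by $z^{\pm 1}$ shifts indices while $P^{\pm}$ truncates, and use $\overline{\phi_{f}} = z f_{-}$ with $\overline{\phi_{f}(0)} = \widehat{f_{-}}(-1)$ to translate the results into the stated form. Nothing is missing.
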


%In this paper all of our functions we will be regarded as $L^2$ functions, even though some of them will have extensions to analytic functions on $\D$. In this regard, we have the following technical result. 

Regarding $\K_{u}$ as a subspace of $L^2$, we have the following useful result. 

\begin{Proposition}\label{978uwioejfskdlac}
If $u$ is inner then $\overline{u} \K_{u} = \overline{z} \overline{\K_{u}}$.
\end{Proposition}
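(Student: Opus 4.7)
The plan is to show that both $\overline{u}\K_{u}$ and $\overline{z}\,\overline{\K_{u}}$ coincide with the concrete set $\overline{u} H^{2} \cap \overline{H^{2}_{0}}$, and then conclude. The key facts I will use are that $u$ is unimodular on $\T$ (so $M_{\overline{u}}$ is unitary on $L^{2}$), that $(H^{2})^{\perp}=\overline{H^{2}_{0}}$, and that $(\overline{H^{2}})^{\perp}=H^{2}_{0}=zH^{2}$.

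For the left side, I would start from $\K_{u} = H^{2} \cap (uH^{2})^{\perp}$ and push $M_{\overline{u}}$ through. Since a unitary preserves intersections and orthogonal complements, and since $\overline{u}\cdot uH^{2} = H^{2}$, one gets
$$\overline{u}\K_{u} \;=\; \overline{u}H^{2} \,\cap\, \overline{u}(uH^{2})^{\perp} \;=\; \overline{u}H^{2} \,\cap\, (H^{2})^{\perp} \;=\; \overline{u}H^{2} \cap \overline{H^{2}_{0}}.$$

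For the right side, I would work by direct characterization. Note that $g = \overline{z}\,\overline{h}$ is equivalent to $h = \overline{z}\,\overline{g}$, so $g \in \overline{z}\,\overline{\K_{u}}$ iff $\overline{z}\,\overline{g} \in \K_{u}$, which is two conditions: (i) $\overline{z}\,\overline{g} \in H^{2}$, and (ii) $\overline{z}\,\overline{g} \perp uH^{2}$. Condition (i) unwinds immediately to $\overline{g} \in zH^{2}=H^{2}_{0}$, that is, $g \in \overline{H^{2}_{0}}$. For condition (ii), a short computation rewrites $\langle \overline{z}\,\overline{g}, uf\rangle = 0$ for all $f \in H^{2}$ as $\int (zug)f\,dm = 0$ for all $f \in H^{2}$; examining the Fourier coefficients of $F:=zug$ via $f=z^{n}$ forces $\widehat{F}(-n)=0$ for all $n\geq 0$, so $zug \in zH^{2}$, equivalently $ug \in H^{2}$, equivalently $g \in \overline{u}H^{2}$. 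Combining (i) and (ii) yields $\overline{z}\,\overline{\K_{u}} = \overline{u}H^{2} \cap \overline{H^{2}_{0}}$, matching the left side.

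I do not expect any real obstacle. The only subtle moment is the Fourier-coefficient translation in condition (ii): one must be careful to distinguish the bilinear pairing $\int Ff\,dm$ (which detects $\widehat{F}(-n)$) from the sesquilinear inner product, and to use the correct identification $(\overline{H^{2}})^{\perp}=H^{2}_{0}$. Once this point is handled cleanly, the argument is essentially bookkeeping with the unimodularity of $u$ and the orthogonal decomposition $L^{2}=H^{2}\oplus\overline{H^{2}_{0}}$.
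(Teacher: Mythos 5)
Your proof is correct, but it takes a different route from the paper's. The paper's proof is a two-line deduction from the standard fact that the conjugation $C_{u}f = u\overline{zf}$ is an involutive isometry of $L^{2}$ with $C_{u}\K_{u} = \K_{u}$: one simply writes $\overline{u}\K_{u} = \overline{u}\,C_{u}\K_{u} = \overline{u}\,u\,\overline{z\K_{u}} = \overline{z}\,\overline{\K_{u}}$. You instead identify both sides with the concrete set $\overline{u}H^{2}\cap\overline{H^{2}_{0}}$ by elementary bookkeeping: unitarity of $M_{\overline{u}}$ for the left side, and a Fourier-coefficient analysis of the orthogonality $\overline{z}\overline{g}\perp uH^{2}$ for the right side. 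Your computation for the right side is essentially a from-scratch verification of the very fact the paper cites, namely that $\K_{u} = H^{2}\cap u\overline{H^{2}_{0}}$ is invariant under $k\mapsto u\overline{zk}$; so the two arguments rest on the same underlying symmetry. What your version buys is self-containedness (no appeal to the conjugation machinery), at the cost of length; what the paper's buys is brevity and the reuse of $C_{u}$, which reappears later (e.g.\ in the identity $C_{u}D_{u}C_{u} = D_{u}^{*}$). One small point worth making explicit in your condition (ii): the test functions $f = z^{n}$ only span a dense subspace of $H^{2}$, so you should note that $f\mapsto\int Ff\,dm$ is continuous on $H^{2}$ (since $F = zug\in L^{2}$) before concluding that vanishing on monomials gives vanishing on all of $H^{2}$; this is routine and does not affect correctness.
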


\begin{proof}
It is a standard fact \cite{MR3526203} that the conjugate-linear operator 
\begin{equation}\label{conjugationoperator}
C_{u}: L^2 \to L^2, \quad C_{u} f = u \overline{z f},
\end{equation}
 is an involutive isometry on $L^2$ with 
$
C_{u} \K_u  = \K_u$ and $C_{u} \K_{u}^{\perp} = \K_{u}^{\perp}. 
$
Thus $\overline{u} \K_u  = \overline{u} C_{u} \K_{u}
= \overline{z} \overline{\K_{u}}$.
\end{proof}

The model space $\K_{u}$ is a reproducing kernel Hilbert space on $\D$ with kernel 
$$
k^{u}_{\lambda}(z) = \frac{1 - \overline{u(\lambda)} u(z)}{1 - \overline{\lambda} z}, \quad \lambda, z \in \D,
$$
meaning that 
$f(\lambda) = \langle f,  k^{u}_{\lambda}\rangle$ for $f \in \K_{u}$ and $\lambda \in \D$ \cite[p.~111]{MR3526203}.

\section{Some basic facts about the dual}

In this section we will develop some useful facts about $D_u$. We start with a more useful formula for $D_{u}$ than the one in  \eqref{dedefineirne}.
% We make note of the function 
%$k_{0}^{u}(z) = 1 - \overline{u(0)} u(z)$
%from \eqref{uUyyHHt} which will appear many times throughout this paper.
% In the following result, recall the definition of $\phi_{f}$ from \eqref{zZxXvV}.

\begin{Proposition}\label{0982iu4hrfjdslkva}
If $u$ is inner then 
$$D_{u} f = z f - \overline{\phi_{f}(0)} k^{u}_{0}, \quad f \in \K_{u}^{\perp}.$$
\end{Proposition}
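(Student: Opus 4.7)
The plan is to reduce $D_u f = (I - P_u)(zf)$ to a computation of $P_u(zf)$, and to show that this projection extracts exactly the scalar multiple $\overline{\phi_f(0)} k_0^u$ of the reproducing kernel at $0$.

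First I would use the decomposition \eqref{7765432qq} to write $f = f_- + u\widetilde{f_+}$ with $f_- \in \overline{H_0^2}$ and $\widetilde{f_+} \in H^2$. Then
\[
zf = zf_- + u(z\widetilde{f_+}),
\]
and since $u(z\widetilde{f_+}) \in uH^2 \subset \K_u^\perp$, this term is killed by $P_u$. So the whole problem collapses to evaluating $P_u(zf_-)$.

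For that step I would apply the second formula in Lemma \ref{03r98egiodfsja}, namely $P_u = M_u P^- M_{\bar u} P^+$. By Lemma \ref{uUyyHHt}(iv), $P^+(zf_-) = \overline{\phi_f(0)}$, which is a constant. Thus
\[
P_u(zf_-) = u\, P^-\bigl(\overline{\phi_f(0)}\,\bar u\bigr) = \overline{\phi_f(0)}\, u\, P^-(\bar u).
\]
Since $\bar u = \overline{u(0)} + \bigl(\bar u - \overline{u(0)}\bigr)$ with $\overline{u(0)} \in H^2$ and $\bar u - \overline{u(0)} \in \overline{H_0^2}$, one has $P^-(\bar u) = \bar u - \overline{u(0)}$. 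Multiplying by $u$ gives $1 - \overline{u(0)}\,u$, which is exactly $k_0^u$ (substituting $\lambda = 0$ in the reproducing kernel formula from Section 2).

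Putting everything together, $P_u(zf) = \overline{\phi_f(0)}\, k_0^u$, and subtracting from $zf$ yields the claimed formula. There is no real obstacle here; the only thing to watch is choosing the right form of $P_u$ from Lemma \ref{03r98egiodfsja} so that the constant produced by Lemma \ref{uUyyHHt}(iv) can be pulled out before taking $P^-$, which is what makes the reproducing kernel at $0$ appear cleanly.
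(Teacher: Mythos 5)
Your proof is correct, and it is organized differently from the paper's. The paper computes $(I-P_u)(zf)$ directly, using the identity $I-P_u = P^- + M_uP^+M_{\overline{u}}$ from Lemma \ref{03r98egiodfsja} and expanding all four resulting terms; this forces it to evaluate $uP^+(z\overline{u}f_-)$, which requires the auxiliary identities $\overline{u}f_- = \overline{z}\,\overline{u}\,\overline{\phi_f}$ and $\phi_{\overline{u}f_-} = u\phi_f$ noted at the end of its proof. You instead compute $P_u(zf)$ and subtract, using the factored form $P_u = M_uP^-M_{\overline{u}}P^+$ from the same lemma. That choice is what makes your argument shorter: $uH^2$ is annihilated by $P_u$ outright, and $P^+(zf_-)$ is the constant $\overline{\phi_f(0)}$ by Lemma \ref{uUyyHHt}(iv), so the whole calculation reduces to $P^-(\overline{u}) = \overline{u}-\overline{u(0)}$ and the identity $u\overline{u}=1$ a.e.\ on $\T$, with the reproducing kernel $k_0^u = 1-\overline{u(0)}u$ appearing at once. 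Both arguments rest on the same two lemmas; yours trades the paper's term-by-term bookkeeping for a cleaner structural reduction, at no loss of rigor. The one point worth making explicit (you use it implicitly) is that $u(z\widetilde{f_+})\in uH^2\perp\K_u$, which justifies $P_u\bigl(u(z\widetilde{f_+})\bigr)=0$.
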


\begin{proof}
For $f = f_{-} + u \widetilde{f_{+}} \in \K_{u}^{\perp}$ use Lemma \ref{03r98egiodfsja} to see that 
\begin{align*}
D_{u} f & = (I - P_{u}) (z f) = (P^{-} + u P^{+} \overline{u})(z f_{-} + z u \widetilde{f_{+}})\\
& = z f_{-} - \overline{\phi_{f}(0)} + u P^{+} z \overline{u} f_{-} + z u \widetilde{f_{+}}\\
& = z f_{-} + z u \widetilde{f_{+}} - \overline{\phi_{f}(0)} + u \overline{u(0)} \overline{\phi_{f}(0)}\\
& = z f - \overline{\phi_{f}(0)} (1 - \overline{u(0)} u)
 = z f - \overline{\phi_{f}(0)} k_{0}^{u}.
\end{align*}
Note the use of Lemma \ref{uUyyHHt} and $\overline{u} f_{-} = \overline{z} \overline{u} \overline{\phi_{f}}$ and $\phi_{\overline{u} f_{-}} = u \phi_{f}.$
%\begin{align*}
%D_{u} f & = (I - P_{u}) (z f)\\
%& = z f - P_{u} (z f)\\
%& = z f - P_{u}(z f_{-} + z u \widetilde{f_{+}})\\
%& = z f - P_{u} (z f_{-})- P_{u}(z u \widetilde{f_{+}})\\
%& = z f - \underbrace{u P^{-} \overline{u} P^{+}(z f_{-}) - u P^{-}(z \widetilde{f_{+}})}_{\mbox{\tiny Lemma \ref{03r98egiodfsja}}}\\
%& = z f - \underbrace{u P^{-}(\overline{u} \overline{\phi_{f}(0)})}_{\mbox{\tiny Lemma \ref{uUyyHHt}}}- 0\\
%& = z f - \overline{\phi_{f}(0)} u P^{-}(\overline{u})\\
%& = z f - \overline{\phi_{f}(0)} u P^{-}((\overline{u}  - \overline{u(0)}) + \overline{u(0)})\\
%& = z f - \overline{\phi_{f}(0)} u (\overline{u} - \overline{u(0)})\\
%& = z f - \overline{\phi_{f}(0)} k^{u}_{0}. 
%\end{align*}
%Using \eqref{ooopopoddGt666} yields the second identity of the proposition. 
\end{proof}

\begin{comment}
\begin{Corollary}
For all inner functions $u$ for which $u(0) = 0$ we have 
$$D_{u} f = z f - \int_{\T} z f dm = z f - \overline{\phi_{f}(0)}, \quad f \in K_{u}^{\perp}.$$ 
\end{Corollary}
\end{comment}

\begin{Corollary}\label{1998764}
If $u$ is inner then
$D_{u}|_{u H^2} = S|_{u H^2}$ and thus 
$D_{u}(u H^2) \subset u H^2$. When $u(0) = 0$, we have $D_{u} \overline{H^{2}_{0}} = \overline{H^{2}_{0}}$.
\end{Corollary}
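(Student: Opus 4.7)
The plan is to apply Proposition \ref{0982iu4hrfjdslkva} separately on each of the two distinguished subspaces of $\K_u^\perp = \overline{H^2_0} \oplus uH^2$.

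For the first assertion, let $f \in uH^2$. In the decomposition \eqref{7765432qq} of such an $f$ we have $f_- = 0$, so by \eqref{zZxXvV} we get $\phi_f = 0$, and in particular $\phi_f(0) = 0$. The formula from Proposition \ref{0982iu4hrfjdslkva} then collapses to $D_u f = zf = Sf$. Since $uH^2$ is $S$-invariant (this is Beurling's theorem, already used in the introduction), $zf \in uH^2$, which gives both $D_u|_{uH^2} = S|_{uH^2}$ and the inclusion $D_u(uH^2) \subset uH^2$.

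For the second assertion, assume $u(0) = 0$, so that $k_0^u(z) = 1 - \overline{u(0)} u(z) = 1$. Proposition \ref{0982iu4hrfjdslkva} then reads $D_u f = zf - \overline{\phi_f(0)}$ for $f \in \K_u^\perp$. Take $f \in \overline{H^2_0}$, so $f = f_-$ and $f_+ = 0$. By parts (i) and (iv) of Lemma \ref{uUyyHHt},
\[
zf = P^+(zf_-) + P^-(zf_-) = \overline{\phi_f(0)} + \bigl(\overline{\phi_f} - \overline{\phi_f(0)}\bigr),
\]
so $D_u f = \overline{\phi_f} - \overline{\phi_f(0)}$, which lies in $\overline{H^2_0}$ since $\phi_f \in H^2$ and we have subtracted off the constant term. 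Thus $D_u(\overline{H^2_0}) \subset \overline{H^2_0}$.

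The only thing left is the reverse inclusion, which I expect to be the main (very small) obstacle. The cleanest route is a Fourier-coefficient computation: if $f = \sum_{n \geq 1} a_n \overline{z}^n \in \overline{H^2_0}$, then $\phi_f = \sum_{k \geq 0} \overline{a_{k+1}} z^k$ by \eqref{zZxXvV}, so $\overline{\phi_f(0)} = a_1$, and a direct expansion yields
\[
D_u f \;=\; zf_- - a_1 \;=\; \sum_{k \geq 1} a_{k+1}\overline{z}^k.
\]
In other words $D_u$ acts on $\overline{H^2_0}$ like a one-sided shift on the coefficient sequence; given any $g = \sum_{k \geq 1} b_k \overline{z}^k \in \overline{H^2_0}$, any $f \in \overline{H^2_0}$ with $a_{n+1} = b_n$ (the coefficient $a_1$ being free) is a preimage. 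This establishes $D_u(\overline{H^2_0}) = \overline{H^2_0}$ and completes the proof.
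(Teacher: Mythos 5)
Your proposal is correct and follows exactly the route the paper intends: the corollary is stated as an immediate consequence of Proposition \ref{0982iu4hrfjdslkva}, and you apply that formula on $uH^2$ (where $\phi_f=0$) and on $\overline{H^{2}_{0}}$ (where $k_0^u\equiv 1$ when $u(0)=0$). The Fourier-coefficient verification of surjectivity of $D_u$ on $\overline{H^{2}_{0}}$ is the right way to justify the equality, which the paper leaves implicit.
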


The definition of $D_{u}$ from \eqref{dedefineirne} shows that 
$
D_{u}^{*} = D_{\overline{z}}^{u}.
$
 In fact, via the conjugation operator $C_u$ from \eqref{conjugationoperator}, we have 
$
C_{u} D_{u} C_{u} = D_{u}^{*}
$ \cite{MR3759573}.
Proposition \ref{0982iu4hrfjdslkva} and the above conjugation identity yield the following. 

\begin{Proposition}
If $u$ is inner then $D_{u}^{*} f = \overline{z} f - \widetilde{f_{+}}(0) C_{u} k_{0}^{u}$. Furthermore, $D_{u}^{*}|_{\overline{H^{2}_{0}}} = M^{*}|_{\overline{H^{2}_{0}}},$ where $M^{*} f = \overline{z} f$, and thus $D_{u}^{*} \overline{H^{2}_{0}} \subset \overline{H^{2}_{0}}$. When $u(0) = 0$, we have $D_{u}^{*} (u H^2) = u H^2$. 
\end{Proposition}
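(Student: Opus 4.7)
The plan is to derive the formula for $D_u^*$ from the formula for $D_u$ given in Proposition \ref{0982iu4hrfjdslkva} via the conjugation identity $D_u^* = C_u D_u C_u$ (mentioned in the text just before this proposition). So for $f \in \K_u^\perp$, I would set $g := C_u f = u\overline{zf}$ and write
\[
D_u^* f \;=\; C_u D_u g \;=\; C_u(zg) \;-\; \phi_g(0)\, C_u k_0^u,
\]
where the sign on the second term uses that $C_u$ is conjugate-linear. The first term simplifies nicely: since $C_u$ is involutive, one checks on $\T$ that $C_u(zg) = u\,\overline{z^2 g} = u\overline{z^2}\,\overline{u}\,zf = \overline{z}f$, using $\overline{g} = \overline{u}\cdot zf$.

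The key computational step is identifying $\phi_g(0)$ with $\widetilde{f_+}(0)$. I would decompose $g = C_u f$ into its $\overline{H^2_0}$ and $uH^2$ parts. Writing $f = f_- + u\widetilde{f_+}$ and using $u\overline{u} = 1$ on $\T$, one gets
\[
g \;=\; u\overline{z}\,\overline{f_-} \;+\; \overline{z}\,\overline{\widetilde{f_+}}.
\]
Since $\widetilde{f_+} \in H^2$, the term $\overline{z}\,\overline{\widetilde{f_+}}$ lies in $\overline{H^2_0}$, and since $\overline{f_-} \in H_0^2$, the term $u\overline{z}\,\overline{f_-}$ lies in $uH^2$. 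Hence $g_- = \overline{z}\,\overline{\widetilde{f_+}}$, and from the definition \eqref{zZxXvV}, $\phi_g = \overline{z}\,\overline{g_-} = \widetilde{f_+}$, so $\phi_g(0) = \widetilde{f_+}(0)$, yielding the claimed formula for $D_u^* f$.

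The remaining assertions are then short corollaries of the formula. For $f \in \overline{H^2_0}$, we have $\widetilde{f_+} = 0$, so $D_u^* f = \overline{z}f$; and $\overline{z}\,\overline{H^2_0} \subset \overline{H^2_0}$, which gives $D_u^*|_{\overline{H^2_0}} = M^*|_{\overline{H^2_0}}$. For the last claim, when $u(0) = 0$ one has $k_0^u \equiv 1$, so $C_u k_0^u = u\overline{z}$. For $f = u\widetilde{f_+} \in uH^2$, the formula collapses to
\[
D_u^* f \;=\; \overline{z}\,u\widetilde{f_+} \;-\; \widetilde{f_+}(0)\,u\overline{z} \;=\; u\,\overline{z}\bigl(\widetilde{f_+} - \widetilde{f_+}(0)\bigr) \;=\; u\,S^*\widetilde{f_+} \;\in\; uH^2,
\]
and surjectivity onto $uH^2$ follows because $S^*:H^2 \to H^2$ is onto (given $uk \in uH^2$, take $\widetilde{f_+}(z) = zk(z)$). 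The main (mildly delicate) step is the $\phi_g(0) = \widetilde{f_+}(0)$ identification, since it requires tracking the two pieces of $g$ after the conjugation carefully; everything else is bookkeeping using Lemma \ref{uUyyHHt} and the involutive/isometric properties of $C_u$.
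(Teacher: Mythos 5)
Your proof is correct and follows exactly the route the paper indicates: the paper gives no written proof, stating only that the result follows from Proposition \ref{0982iu4hrfjdslkva} together with the identity $C_u D_u C_u = D_u^*$, and your computation (including the key identification $\phi_{C_u f}(0)=\widetilde{f_+}(0)$ and the collapse to $uS^*\widetilde{f_+}$ when $u(0)=0$) is precisely the derivation being left to the reader.
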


Here are some interesting facts from  \cite{Group} and  \cite{MR3759573} about $D_u$.

\begin{Proposition}\label{uuuuhhww}
For an inner function $u$ we have the following:
\begin{enumerate}
\item[(i)] $\|D_{u}\| = 1.$
%\item[(ii)] $C_{u} D_{u} C_{u}  = D_{u}^{*},$
\item[(ii)]  $\sigma(D_u) = \overline{\D}$ when $u(0) = 0$ while $\sigma(D_u) = \T$ when $u(0) \not = 0$.
\item[(iii)] $D_{u} D_{u}^{*} = I -(1-|u(0)|^2)u\otimes u$.
%\item[(v)]  $D_{u}^{*} D_{u} =I -(1-|u(0)|^2) \bar z\otimes\bar z$.
\end{enumerate}
\end{Proposition}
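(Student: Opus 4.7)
The plan is to establish (iii) first by direct computation, deduce (i) immediately, and address (ii) by case analysis. The main obstacle will be the inclusion $\sigma(D_{u}) \subset \T$ in the case $u(0) \neq 0$, which does not follow from the invertibility of $D_{u}$ alone and seems to require the similarity $D_{u} \sim M_{z}$ developed later in the paper.

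For (iii), I would exploit the formulas $D_{u} = (I - P_{u}) M_{z}|_{\K_{u}^{\perp}}$ and $D_{u}^{*} = (I - P_{u}) M_{\overline{z}}|_{\K_{u}^{\perp}}$. Since $(I - P_{u}) f = f$ for $f \in \K_{u}^{\perp}$, one obtains $D_{u} D_{u}^{*} f = f - (I - P_{u}) M_{z} P_{u} M_{\overline{z}} f$. The heart of the proof is to compute $P_{u} M_{\overline{z}} f$: using Lemma \ref{03r98egiodfsja}, the decomposition $f = f_{-} + u \widetilde{f_{+}}$ from \eqref{7765432qq}, and the Fourier identities of Lemma \ref{uUyyHHt}, I expect the result to simplify to $P_{u} M_{\overline{z}} f = \widetilde{f_{+}}(0)\, \overline{z}(u - u(0))$. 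Multiplying by $z$ yields $\widetilde{f_{+}}(0)(u - u(0))$, and then applying $I - P_{u}$ via $P_{u} u = 0$ (since $u \in u H^{2} \subset \K_{u}^{\perp}$) and $P_{u} 1 = k_{0}^{u}$ (reproducing kernel at the origin) gives $(I - P_{u})(u - u(0)) = (1 - |u(0)|^{2}) u$. Noting that $\widetilde{f_{+}}(0) = \langle f, u\rangle$, the identity $D_{u} D_{u}^{*} = I - (1 - |u(0)|^{2}) u \otimes u$ follows.

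Part (i) is then immediate, since the self-adjoint operator $I - (1 - |u(0)|^{2}) u \otimes u$ has spectrum $\{1, |u(0)|^{2}\} \subset [0, 1]$ and hence norm $1$, giving $\|D_{u}\|^{2} = \|D_{u} D_{u}^{*}\| = 1$. (Alternatively, $\|D_{u}\| \leq 1$ because $D_{u}$ compresses the unitary $M_{z}$, while $\|D_{u}\| \geq 1$ by Corollary \ref{1998764}, which shows $D_{u}|_{u H^{2}} = S|_{u H^{2}}$ is an isometry.) For (ii), I split into two cases. When $u(0) = 0$, Corollary \ref{1998764} makes both $\overline{H^{2}_{0}}$ and $u H^{2}$ invariant under $D_{u}$, producing a direct-sum decomposition; on $u H^{2}$, $D_{u}$ is unitarily equivalent via $M_{\overline{u}}$ to $S$ on $H^{2}$, and on $\overline{H^{2}_{0}}$, a short Fourier computation using $\overline{\phi_{f}(0)} = \widehat{f}(-1)$ identifies $D_{u}$ with the backward shift, unitarily equivalent to $S^{*}$. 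Thus $\sigma(D_{u}) = \sigma(S) \cup \sigma(S^{*}) = \overline{\D}$. When $u(0) \neq 0$, (iii) gives that $D_{u} D_{u}^{*}$ has spectrum $\{1, |u(0)|^{2}\}$ and hence is invertible; the conjugation $C_{u} D_{u} C_{u} = D_{u}^{*}$ transfers this to $D_{u}^{*} D_{u}$, making $D_{u}$ invertible. Approximate eigenvectors $f_{n} = u g_{n}$, with $g_{n} \in H^{2}$ chosen so that $\|(z - \lambda) g_{n}\|/\|g_{n}\| \to 0$ for $\lambda \in \T$, give $\T \subset \sigma_{ap}(D_{u})$. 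The remaining step, ruling out the annulus $\{0 < |\lambda| < 1\}$ from $\sigma(D_{u})$, is the principal obstacle and would be settled by the similarity $D_{u} \sim M_{z}$ on $L^{2}$ established later in the paper for $u(0) \neq 0$, yielding $\sigma(D_{u}) = \sigma(M_{z}) = \T$.
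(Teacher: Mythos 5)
The paper offers no proof of this proposition at all: it is quoted from the references \cite{Group} and \cite{MR3759573}, so there is no internal argument to compare yours against. That said, your proposal is correct and is built entirely from tools the paper does develop. The central computation in (iii) checks out: writing $f = f_{-} + u\widetilde{f_{+}}$, one has $P_{u}(\overline{z}f_{-}) = 0$ since $\overline{z}f_{-} \in \overline{H^{2}_{0}} \perp \K_{u}$, and Lemma \ref{03r98egiodfsja} together with Lemma \ref{uUyyHHt}(iii) gives
$P_{u}(\overline{z}u\widetilde{f_{+}}) = (u\widetilde{f_{+}} - u(0)\widetilde{f_{+}}(0))\overline{z} - u(\widetilde{f_{+}} - \widetilde{f_{+}}(0))\overline{z} = \widetilde{f_{+}}(0)\,\overline{z}(u - u(0))$,
exactly as you predicted; the identities $P_{u}u = 0$, $(I-P_{u})1 = \overline{u(0)}u$, and $\widetilde{f_{+}}(0) = \langle f, u\rangle$ then finish (iii), and (i) follows from the $C^{*}$-identity. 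Two small remarks on (ii). First, your appeal to Theorem \ref{uuUUuuuv78778768768} (and, for the $u(0)=0$ case, to the decomposition underlying Corollary \ref{bb77ujmkjhnb}) is a forward reference within the paper, but it is not circular: the proofs of Theorems \ref{uu6663455} and \ref{uuUUuuuv78778768768} use only Proposition \ref{0982iu4hrfjdslkva} and Lemma \ref{uUyyHHt}, never Proposition \ref{uuuuhhww}. Second, once you invoke the similarity $V_{u}D_{u}V_{u}^{-1} = M$ for $u(0) \neq 0$, the conclusion $\sigma(D_{u}) = \sigma(M) = \T$ is immediate, so your preliminary steps (invertibility via $D_{u}D_{u}^{*}$ and the conjugation $C_{u}$, and the approximate eigenvectors $ug_{n}$ showing $\T \subset \sigma_{ap}(D_{u})$) are correct but redundant; they would only be needed if you wanted a proof independent of Theorem \ref{uuUUuuuv78778768768}, in which case they do not by themselves exclude the open annulus $\{0 < |\lambda| < 1\}$, as you rightly flag.
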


\section{Unitary equivalence and similarity}

For two compressed shifts $S_u$ and $S_v$ we know that $S_{u}$ is unitarily equivalent to $S_v$ if and only if $u$ is a constant unimodular multiple of $v$. For their duals, they are often unitarily equivalent and even more often similar.  This will be an important part of our analysis of their invariant subspaces. 

For an inner function $u$, the authors in \cite{BHal} define the onto isometry 
\begin{equation}\label{UUUUuuuu}
U: L^2 = H^2 \oplus \overline{H^{2}_{0}} \to \K_{u}^{\perp} = u H^2 \oplus \overline{H^{2}_{0}}, \quad 
U = \begin{bmatrix}
M_{u} & 0\\
0 & I
\end{bmatrix},
\end{equation}
where recall that $M_{u} f = u \cdot f$ on $L^2$. 
%Note that 
%$$U^{*} = 
%\begin{bmatrix}
%M_{\overline{u}} & 0\\
%0 & I
%\end{bmatrix}.
%$$
A computation in that paper yields the following lemma.  For any $\phi \in L^{\infty}$ recall the definition of the Hankel operator 
$H_{\phi}: H^2 \to \overline{H_{0}^{2}}$, $H_{\phi}f = P^{-}(\phi f)$ as well as the following formula for its adjoint 
$H_{\phi}^{*}: \overline{H^{2}_{0}} \to H^2$, $H_{\phi}^{*} f =  P^{+}(\overline{\phi} f).$

\begin{Lemma}
For an inner function $u$ we have
$$U^{*} D_{u} U = 
\begin{bmatrix}
S & H_{u \overline{z}}^{*}\\
0  & Q
\end{bmatrix},$$
where $S$ is the shift on $H^2$ from \eqref{shift} and 
$Q: \overline{H^{2}_{0}} \to \overline{H^{2}_{0}}$, $Q g = P^{-} (z g).$
\end{Lemma}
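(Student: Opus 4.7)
The plan is to chase an arbitrary vector through $U^*D_uU$ and verify that the four matrix entries emerge on the nose. Fix $(f,g) \in H^2 \oplus \overline{H^2_0}$. Then $U(f,g) = uf + g$, which lies in $uH^2 \oplus \overline{H^2_0} = \K_u^\perp$ with $\overline{H^2_0}$-part equal to $g$ and $uH^2$-part equal to $uf$. In the notation of \eqref{7765432qq}, $(uf+g)_- = g$ and $\widetilde{(uf+g)_+} = f$.

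Next I would invoke Proposition \ref{0982iu4hrfjdslkva} to write
$$D_u(uf + g) = z(uf+g) - \overline{\phi_{uf+g}(0)}\,k_0^u = zg + uzf - c(1 - \overline{u(0)}u),$$
where $c := \overline{\phi_{uf+g}(0)} = \widehat{g}(-1)$ by \eqref{ooopopoddGt666}. The point now is to decompose the right-hand side into its $\overline{H^2_0}$ and $uH^2$ pieces. Since the Laurent expansion of $g$ starts with $c\overline{z}$, one has $zg - c = P^-(zg) = Qg \in \overline{H^2_0}$; on the other hand, $uzf + c\overline{u(0)}u = u(zf + c\overline{u(0)}) \in uH^2$. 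Therefore
$$D_uU(f,g) = Qg + u\bigl(Sf + c\overline{u(0)}\bigr).$$

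Applying $U^*$ strips off the outer $u$ in the $uH^2$ coordinate and leaves the $\overline{H^2_0}$ coordinate alone, producing $(Sf + c\overline{u(0)},\, Qg)$. The last remaining verification is that $c\overline{u(0)} = H_{u\overline z}^*g$. By definition, $H_{u\overline z}^*g = P^+(\overline{u}\,zg)$. Decompose $zg = c + P^-(zg)$: the first summand contributes $cP^+(\overline{u}) = c\overline{u(0)}$, while the second contributes $P^+\bigl(\overline{u}\,P^-(zg)\bigr) = 0$ because $\overline{u} \in \overline{H^2}$ and $P^-(zg) \in \overline{H^2_0}$, so their product lies entirely in $\overline{H^2_0}$. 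This yields exactly the claimed matrix.

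The computation is mostly bookkeeping; the only step requiring care is matching the constant term $c\overline{u(0)}$ with the Hankel formula $H_{u\overline z}^*g = P^+(\overline u\,zg)$, and this is handled by the Fourier-support argument above. Everything else follows from Proposition \ref{0982iu4hrfjdslkva} and a direct splitting into the $\overline{H^2_0} \oplus uH^2$ decomposition of $\K_u^\perp$.
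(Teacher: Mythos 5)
Your computation is correct: the decomposition $D_uU(f,g) = Qg + u(Sf + \widehat{g}(-1)\,\overline{u(0)})$ checks out, and the identification $H_{u\overline z}^*g = P^+(\overline{u}zg) = \overline{u(0)}\,\widehat{g}(-1)$ agrees with the formula the paper itself derives later (in the proof of Theorem \ref{uu6663455}) via Lemma \ref{uUyyHHt}(iv). The paper does not print a proof of this lemma, deferring to a computation in \cite{BHal}, but your argument is exactly the direct verification intended, built on Proposition \ref{0982iu4hrfjdslkva} and the $\overline{H^2_0}\oplus uH^2$ splitting.
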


%\begin{proof}
%The paper \cite{BHal} proves a more general result than what we prove here. For $f \in H^2$ Corollary \ref{1998764} yields
%$D_{u}(u f) = u z f.$
%If $g \in \overline{H^{2}_{0}}$ Lemma \ref{03r98egiodfsja} yields
%$$D_{u} g = u P^{+}(\overline{u} z g) + P^{-}(z g).$$
%Thus 
%\begin{align*}
%U^{*} D_u U \begin{pmatrix} f\\ g \end{pmatrix} & = U^{*} D_u \begin{pmatrix} u f\\ g\end{pmatrix}\\
%& = U^{*} \begin{pmatrix} u z f + u P^{+} (\overline{u} z g)\\ P^{-}(z g)\end{pmatrix}\\
%& = \begin{pmatrix} z f + P^{+}(\overline{u} z g)\\ P^{-}(z g)\end{pmatrix}\\
% & = \begin{pmatrix}
%S & H_{u \overline{z}}^{*}\\
%0  & Q
%\end{pmatrix} \begin{pmatrix} f\\ g\end{pmatrix}. \qedhere
%\end{align*}
%\end{proof}

One of the main theorems of this section is the following.

\begin{Theorem}\label{uu6663455}
Let $u$ be an inner function. 
\begin{enumerate}
\item[(i)] If  $u(0) = 0$, then $D_{u}$ is unitarily equivalent to the operator 
$$\begin{bmatrix}
S & 0\\
0 & Q
\end{bmatrix}: H^2 \oplus \overline{H^{2}_{0}} \to H^2 \oplus \overline{H^{2}_{0}}$$
and thus for any two inner functions $u$ and $v$ which vanish at $0$, the operators $D_u$ and $D_v$ are unitarily equivalent. 
\item[(ii)] If $u(0) \not = 0$, then $D_{u}$ is unitarily equivalent to the operator
$$\begin{bmatrix}
S & \overline{u(0)} (1 \otimes \overline{z})\\
0 & Q
\end{bmatrix}: H^2 \oplus \overline{H^{2}_{0}} \to H^2 \oplus \overline{H^{2}_{0}}.$$
\end{enumerate}
\end{Theorem}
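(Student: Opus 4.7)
The plan is to reduce everything to the computation of $H_{u\overline{z}}^{*}$, since the preceding lemma already unitarily conjugates $D_{u}$ (via the isometry $U$ from \eqref{UUUUuuuu}) to the upper-triangular operator
$$
\begin{bmatrix} S & H_{u\overline{z}}^{*}\\ 0 & Q\end{bmatrix}
$$
on $H^{2}\oplus\overline{H^{2}_{0}}$. So both (i) and (ii) will follow once I identify the $(1,2)$-entry explicitly. Recall that for $\phi\in L^{\infty}$ the adjoint of the Hankel operator satisfies $H_{\phi}^{*}g=P^{+}(\overline{\phi}g)$, so in our case
$$
H_{u\overline{z}}^{*}g \;=\; P^{+}(\overline{u}\,zg),\qquad g\in\overline{H^{2}_{0}}.
$$

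For (i), when $u(0)=0$ I would argue by Fourier support. Any $g\in\overline{H^{2}_{0}}$ has Fourier coefficients supported in $\{n\leq -1\}$, so $zg$ has coefficients supported in $\{n\leq 0\}$. Since $u(0)=0$, the function $\overline{u}$ has coefficients supported in $\{n\leq -1\}$. The product $\overline{u}\cdot zg$ therefore has coefficients supported in $\{n\leq -1\}$, so $P^{+}(\overline{u}zg)=0$. Hence $H_{u\overline{z}}^{*}=0$, and the lemma gives (i); the ``moreover'' for $D_{u}$ and $D_{v}$ is then immediate since the right-hand block matrix no longer depends on $u$.

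For (ii), the same Fourier-series bookkeeping should isolate a single nonzero Fourier coefficient. Writing $g=\sum_{n\leq-1}a_{n}z^{n}$ and $\overline{u}=\sum_{m\geq 0}\overline{\widehat u(m)}z^{-m}$, the only way $\overline{u}\cdot zg$ can contribute to a nonnegative Fourier mode is $m=0$ together with the coefficient of $z^{0}$ in $zg$, namely $a_{-1}$. So
$$
H_{u\overline{z}}^{*}g \;=\; \overline{u(0)}\,a_{-1}\cdot 1 \;=\; \overline{u(0)}\,\widehat{g}(-1)\cdot 1 \;=\; \overline{u(0)}\,\langle g,\overline{z}\rangle\cdot 1,
$$
which is exactly $\overline{u(0)}(1\otimes\overline{z})g$. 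Substituting into the lemma yields (ii).

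The only thing that is at all delicate is being careful about sign conventions and the exact definition of $H_{\phi}^{*}$ (and of the rank-one operator $1\otimes\overline{z}$), so that the scalar $\overline{u(0)}$ ends up in the correct position and not its conjugate. Once that bookkeeping is nailed down, both parts reduce to a one-line index chase.
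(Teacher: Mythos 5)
Your proposal is correct and follows essentially the same route as the paper: both start from the lemma giving $U^{*}D_{u}U=\begin{bmatrix} S & H_{u\overline{z}}^{*}\\ 0 & Q\end{bmatrix}$ and then identify $H_{u\overline{z}}^{*}$ as $0$ when $u(0)=0$ and as the rank-one operator $\overline{u(0)}(1\otimes\overline{z})$ when $u(0)\neq 0$. The only cosmetic difference is that you carry out the Fourier-coefficient bookkeeping directly, whereas the paper packages the same computation through Lemma \ref{uUyyHHt}(iv) and the identity \eqref{ooopopoddGt666}; the conclusions and conventions (including the placement of $\overline{u(0)}$) agree.
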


\begin{proof}
If $u(0) = 0$, then $H^{*}_{u \overline{z}} \equiv 0$. Indeed,
for $g \in \overline{H^{2}_{0}}$,
$$H_{u \overline{z}}^{*} g = P^{+}(z \overline{u} g) = P^{+}\big( \frac{\overline{u}}{\overline{z}} \cdot g\big) = 0,$$
since $\overline{u}/\overline{z} \in \overline{H^2}$ and thus $(\overline{u}/\overline{z}) g \in \overline{H^{2}_{0}}$.

When $u(0) \not = 0$ and $g \in \overline{H^{2}_{0}}$ we can use Lemma \ref{uUyyHHt}(iv) and \eqref{ooopopoddGt666} to get 
$$
H_{u \overline{z}}^{*} g  = P^{+}(\overline{u} z g) = \overline{\phi_{\overline{u} g}(0)} = \overline{u(0)} \overline{\phi_{g}(0)} = \overline{u(0)} \widehat{g}(-1).$$
But  this is the rank one operator
$\overline{u(0)} (1 \otimes \overline{z}): \overline{H^{2}_{0}} \to H^2.$
\end{proof}

We can refine this a bit further. Recall  $S$ and  $S^{*}$  from \eqref{shift} and \eqref{bkshirt88899}.

\begin{Corollary}\label{bb77ujmkjhnb}
If $u$ is inner and $u(0) = 0$, then $D_u$ is unitarily equivalent to  $S \oplus S^{*}$ on $H^2 \oplus H^2$. 
\end{Corollary}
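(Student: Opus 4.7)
The starting point is Theorem \ref{uu6663455}(i), which already tells us that when $u(0)=0$, the operator $D_u$ is unitarily equivalent to $S\oplus Q$ acting on $H^2\oplus\overline{H^2_0}$. So the only thing left is to identify $Q$ (up to unitary equivalence) with $S^*$ acting on $H^2$. The plan is therefore to produce an explicit unitary $V\colon \overline{H^2_0}\to H^2$ intertwining $Q$ and $S^*$, and then splice this together with the equivalence of Theorem \ref{uu6663455}(i).

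The natural choice is the unitary determined on the orthonormal basis $\{\overline{z}^n:n\geq 1\}$ of $\overline{H^2_0}$ by $V(\overline{z}^n)=z^{n-1}$, extended linearly; it sends an orthonormal basis to an orthonormal basis, so it is a unitary. To check that $V Q V^{*}=S^{*}$, I would compute $Q$ on the basis: by definition $Q(\overline{z}^n)=P^{-}(z\,\overline{z}^n)=P^{-}(\overline{z}^{\,n-1})$, which equals $\overline{z}^{\,n-1}$ for $n\geq 2$ and equals $0$ for $n=1$ (since $1\notin\overline{H^2_0}$). Conjugating by $V$ this is exactly the map $z^{n-1}\mapsto z^{n-2}$ for $n\geq 2$ and $1\mapsto 0$, which matches $S^{*}$ from \eqref{bkshirt88899}.

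With this lemma in hand, the corollary follows immediately: combine Theorem \ref{uu6663455}(i) with $I\oplus V$ to get a unitary from $H^2\oplus\overline{H^2_0}$ onto $H^2\oplus H^2$ conjugating $S\oplus Q$ to $S\oplus S^{*}$.

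There is no real obstacle here; the only mildly subtle point is keeping the index shift straight (the basis $\{\overline{z}^n\}$ of $\overline{H^2_0}$ is indexed from $n=1$, while $\{z^n\}$ of $H^2$ is indexed from $n=0$), which forces the offset $n\mapsto n-1$ in the definition of $V$. One could equivalently phrase the argument conceptually by noting that $Q$ is manifestly a unilateral backward shift of multiplicity one, hence unitarily equivalent to $S^{*}$ by uniqueness of such operators, but writing down $V$ explicitly is just as fast.
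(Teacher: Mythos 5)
Your proof is correct and follows essentially the same route as the paper: reduce to $S\oplus Q$ via Theorem \ref{uu6663455}(i), then intertwine $Q$ with $S^{*}$ by a unitary from $\overline{H^{2}_{0}}$ onto $H^2$. The paper's unitary $(Wg)(z)=g(\overline{z})/z$ acts on the basis exactly as your $V(\overline{z}^{\,n})=z^{n-1}$, so the two arguments coincide up to how the intertwiner is written down.
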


\begin{proof}
Via the unitary operator $U$ from \eqref{UUUUuuuu}, we see from Theorem \ref{uu6663455} that $D_u$ is unitarily equivalent to  $S \oplus Q$ on $H^2 \oplus \overline{H^{2}_{0}}$, where $Q g = P^{-}(z  g)$, $g \in \overline{H^{2}_{0}}$. One can quickly check that 
$W: \overline{H^{2}_{0}} \to H^2$, $(W g)(z) = g(\overline{z})/z$
is unitary with $S^{*} W = W Q$. Thus the unitary operator 
$L = I \oplus W: H^2 \oplus \overline{H^{2}_{0}} \to H^2 \oplus H^2$ will satisfy $(S \oplus S^{*}) L = L (S \oplus Q)$. 
\end{proof}

We will refine this unitarily equivalence result further in Theorem \ref{ooOOooiiUUP} below. 

As it turns out, all of the operators $D_u$, when $u(0) \not = 0$, are similar to the bilateral shift $M f = z f$ on $L^2$. This is important observation will come into play when discussing the invariant subspaces for $D_u$. To this end, for $u$ inner with $u(0) \not = 0$, define 
\begin{equation}\label{09iuhgiuyhgfuygf}
V_{u}: \K_{u}^{\perp} \to L^2, \quad V_{u} := P^{-} + \frac{\overline{u}}{\overline{u(0)}} P^{+}
\end{equation}
with inverse 
\begin{equation}\label{uub75chHyYYUUIO}
V_{u}^{-1}: L^2 \to \K_{u}^{\perp}, \quad V_{u}^{-1} = P^{-} + u \overline{u(0)} P^{+}.
\end{equation}
Observe that 
\begin{equation}\label{llLKKkkKK}
V_{u} = P^{-} + \frac{1}{\overline{u(0)}} P^{+} \overline{u}.
\end{equation}

\begin{Theorem}\label{uuUUuuuv78778768768}
If $u$ is inner with $u(0) \not = 0$, then $V_{u} D_{u} V_{u}^{-1} = M$ on $L^2$. Consequently, for and inner $u$ and $v$ with $u(0) \not = 0$, $v(0) \not = 0$, $D_u$ is similar to $D_v$ and $D_{u} = W^{-1} D_{v} W$, where $W: \K_{u}^{\perp} \to \K_{v}^{\perp}$, $W = P^{-} + \frac{\overline{v(0)}}{\overline{u(0)}} v P^{+} \overline{u}$.
\end{Theorem}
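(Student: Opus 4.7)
The plan is to verify the intertwining $V_u D_u V_u^{-1} = M$ directly by computation, and then derive the similarity $D_u \sim D_v$ and the explicit form of the intertwiner $W$ as a composition of the $V$'s.

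\textbf{Step 1: $V_u$ and $V_u^{-1}$ are genuine inverses.} For $f = f_- + u\widetilde{f_+} \in \K_u^\perp$, the formula \eqref{09iuhgiuyhgfuygf} and $|u|=1$ on $\T$ give $V_u f = f_- + \frac{1}{\overline{u(0)}}\widetilde{f_+} \in L^2$. Conversely, for $g = g_+ + g_- \in L^2$, formula \eqref{uub75chHyYYUUIO} gives $V_u^{-1} g = g_- + u\overline{u(0)} g_+$, which manifestly lies in $\overline{H^2_0}\oplus uH^2 = \K_u^\perp$. Composing both ways recovers the identity.

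\textbf{Step 2: The intertwining $V_u D_u = MV_u$.} Use Proposition \ref{0982iu4hrfjdslkva}, which gives $D_u f = zf - \overline{\phi_f(0)}(1 - \overline{u(0)}u)$ for $f = f_- + u\widetilde{f_+} \in \K_u^\perp$. Split $zf = zf_- + zu\widetilde{f_+}$ and apply $P^\pm$, invoking Lemma \ref{uUyyHHt}: parts (i) and (iv) give $P^-(zf_-) = \overline{\phi_f} - \overline{\phi_f(0)}$ and $P^+(zf_-) = \overline{\phi_f(0)}$; the rest are obvious since $zu\widetilde{f_+} \in H^2$ and $1-\overline{u(0)}u \in H^2$. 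This yields
\[
P^-(D_u f) = \overline{\phi_f} - \overline{\phi_f(0)}, \qquad P^+(D_u f) = u\bigl(z\widetilde{f_+} + \overline{\phi_f(0)}\,\overline{u(0)}\bigr).
\]
Then by \eqref{09iuhgiuyhgfuygf},
\[
V_u(D_u f) = P^-(D_u f) + \tfrac{\overline{u}}{\overline{u(0)}} P^+(D_u f) = \overline{\phi_f} - \overline{\phi_f(0)} + \tfrac{z\widetilde{f_+}}{\overline{u(0)}} + \overline{\phi_f(0)} = \overline{\phi_f} + \tfrac{z\widetilde{f_+}}{\overline{u(0)}}.
\]
Since $\overline{\phi_f} = zf_-$ by definition \eqref{zZxXvV}, and $V_u f = f_- + \frac{1}{\overline{u(0)}}\widetilde{f_+}$ from Step 1, the right-hand side equals $z V_u f = M V_u f$.

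\textbf{Step 3: Similarity of $D_u$ and $D_v$.} Define $W := V_v^{-1} V_u : \K_u^\perp \to \K_v^\perp$. The first part applied to both $u$ and $v$ gives $V_u D_u V_u^{-1} = M = V_v D_v V_v^{-1}$, so $W D_u W^{-1} = V_v^{-1} V_u D_u V_u^{-1} V_v = V_v^{-1} M V_v = D_v$, i.e. $D_u = W^{-1} D_v W$. To identify $W$ with the formula in the statement, apply it to $f = f_- + u\widetilde{f_+}$: using the computation of Step 1 together with $P^+(\overline{u}f) = \widetilde{f_+}$ (which follows because $P^+(\overline{u} f_-) = 0$ since $\overline{u}f_-$ has only strictly negative Fourier coefficients), one sees $Wf = f_- + \frac{\overline{v(0)}}{\overline{u(0)}} v \widetilde{f_+} = \bigl(P^- + \tfrac{\overline{v(0)}}{\overline{u(0)}} v P^+ \overline{u}\bigr)f$.

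The main obstacle is the bookkeeping in Step 2: one must track the positive and negative Fourier components of $D_u f$ carefully enough to observe the cancellation of $-\overline{\phi_f(0)}$ against $+\overline{\phi_f(0)}$ when $V_u$ is applied. No subtle ideas are needed — the proof is a controlled computation powered by Proposition \ref{0982iu4hrfjdslkva} and Lemma \ref{uUyyHHt}.
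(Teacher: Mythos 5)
Your proposal is correct and follows essentially the same route as the paper: a direct computation with Proposition \ref{0982iu4hrfjdslkva} and Lemma \ref{uUyyHHt}, merely organized as verifying $V_uD_u = MV_u$ on $\K_u^\perp$ rather than $V_uD_uV_u^{-1}=M$ on $L^2$, and then setting $W=V_v^{-1}V_u$. Your explicit identification of $W$ with $P^-+\frac{\overline{v(0)}}{\overline{u(0)}}vP^+\overline{u}$ is a small but welcome addition that the paper leaves implicit.
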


\begin{proof}
For $f  = f_{-} + f_{+} \in L^2$ use Proposition \ref{0982iu4hrfjdslkva} and Lemma \ref{uUyyHHt} to obtain 
\begin{align*}
& V_{u} D_{u} V_{u}^{-1} (f_{-} + f_{+}) = V_{u} D_{u} (f_{-} + \overline{u(0)} u f_{+})\\
 & = (P^{-} + \frac{\overline{u}}{\overline{u(0)}} P^{+}) (z f_{-} + z u \overline{u(0)} f_{+} - \overline{\phi_{f}(0)} + \overline{\phi_{f}(0)} \overline{u(0)} u)\\
& = \overline{\phi_{f}} - \overline{\phi_{f}(0)} + \frac{\overline{u}}{\overline{u(0)}} \overline{\phi_{f}(0)} + z f_{+} - \frac{\overline{u}}{\overline{u(0)}} \overline{\phi_{f}(0)} + \overline{\phi_{f}(0)}\\
& = z f_{-} + z f_{+} = M f.
\end{align*}
From here it follows that $D_{u} = W^{-1} D_{v} W$ with $W = V_{v}^{-1} V_{u}$. 
\end{proof}

%By using inner products, one can check the following. 

%\begin{Proposition}
%For any inner $u$ with $u(0) \not =  0$, the formula for $V^{*}: L^2 \to \K_{u}^{\perp}$ is given by 
%$V^{*} = P^{-} + \frac{u}{u(0)} P^{+}.$ Since $D_{u}$ is similar to $M$ via $V$, we also see that $D_{u}^{*}$ is similar to $M^{*}$  via $V^{*}$. 
%\end{Proposition}

\begin{Remark}
It is important to point out that although $D_u$ is similar to $M$ when $u(0) \not = 0$, it is not unitarily equivalent to $M$. This is because $M$ is normal while $D_u$ is not (Proposition \ref{uuuuhhww}). It also follows that $D_u$ is not similar to $D_v$ when $u(0) = 0$ and $v(0) \not = 0$ (Proposition \ref{uuuuhhww}).
\end{Remark}

We return to  the unitary equivalence of $D_u$ and $D_v$ begun in Theorem \ref{uu6663455}. 

\begin{Theorem}\label{ooOOooiiUUP}
If $u$ and $v$ are inner functions then $D_u$ is unitarily equivalent to $D_v$ if and only if $|u(0)| = |v(0)|$. 
\end{Theorem}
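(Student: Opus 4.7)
The plan is to establish each direction using the identity from Proposition~\ref{uuuuhhww}(iii) for necessity and the matrix model from Theorem~\ref{uu6663455} for sufficiency.

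For the forward implication, I would invoke Proposition~\ref{uuuuhhww}(iii), which gives
\[
I - D_u D_u^* = (1-|u(0)|^2)\, u \otimes u,
\]
a rank-one positive operator whose norm is $(1-|u(0)|^2)\|u\|^2_{L^2} = 1-|u(0)|^2$ (using that $|u|=1$ a.e.\ on $\T$). Since the operator norm is a unitary invariant, a unitary equivalence $D_u \cong D_v$ forces $\|I - D_u D_u^*\| = \|I - D_v D_v^*\|$, giving $|u(0)| = |v(0)|$ at once.

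For the converse, I would place both $D_u$ and $D_v$ in a common matrix model via Theorem~\ref{uu6663455}. Set
\[
T_c := \begin{bmatrix} S & c(1 \otimes \overline{z}) \\ 0 & Q \end{bmatrix}
\quad \text{on} \quad H^2 \oplus \overline{H^{2}_{0}}.
\]
Theorem~\ref{uu6663455} then reads $D_u \cong T_{\overline{u(0)}}$ and $D_v \cong T_{\overline{v(0)}}$, with case~(i) of that theorem subsumed by taking $c=0$. It therefore suffices to exhibit a unitary equivalence $T_c \cong T_{c'}$ whenever $|c| = |c'|$, and for this I would conjugate by the scalar block-diagonal unitary $W_\alpha := \alpha I_{H^2} \oplus I_{\overline{H^{2}_{0}}}$ with $|\alpha|=1$. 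A quick $2 \times 2$ matrix calculation shows $W_\alpha T_c W_\alpha^* = T_{\alpha c}$, so choosing $\alpha := c'/c$ (or $\alpha := 1$ if $c=0$) completes the proof.

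The only point requiring care is unifying parts~(i) and~(ii) of Theorem~\ref{uu6663455} into the single parametric family $\{T_c\}_{c\in\C}$; after that, both directions reduce to one-step arguments (a rank-one norm bound and a diagonal scalar conjugation). I do not foresee any serious obstacle.
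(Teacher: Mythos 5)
Your proof is correct. For necessity you use the same key identity as the paper, Proposition~\ref{uuuuhhww}(iii), but you extract the conclusion by taking the operator norm of the positive rank-one defect $I - D_uD_u^{*} = (1-|u(0)|^2)\,u\otimes u$, whereas the paper applies the transported rank-one identity to the unit vector $v$ to first deduce $Zu = cv$. Your version is a one-liner and, unlike the paper's case split (``both zero'' / ``both nonzero''), it also covers uniformly the mixed case where exactly one of $u(0), v(0)$ vanishes. For sufficiency the paper instead invokes the similarity $W = P^{-} + \tfrac{\overline{v(0)}}{\overline{u(0)}}\, v P^{+}\overline{u}$ of Theorem~\ref{uuUUuuuv78778768768} and observes it is unitary exactly when $|u(0)|=|v(0)|$; your conjugation of the triangular model $T_c$ by $W_\alpha = \alpha I \oplus I$ is in fact the same unitary viewed in the coordinates of \eqref{UUUUuuuu} (one checks $U_v^{*}WU_u = \tfrac{\overline{v(0)}}{\overline{u(0)}}I \oplus I$, using $P^{+}(\overline{u}g)=0$ for $g \in \overline{H^{2}_{0}}$), so the content is identical. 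What your packaging buys is uniformity: the single family $\{T_c\}$ absorbs the $u(0)=0$ case as $c=0$, so no separate appeal to Theorem~\ref{uu6663455}(i) is needed, while the paper's route has the advantage of exhibiting the intertwining unitary directly on $\K_u^{\perp}$ without passing to the model space $H^2\oplus\overline{H^{2}_{0}}$.
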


\begin{proof}
When $u(0) = v(0) = 0$, the result follows from Theorem \ref{uu6663455}. So  assume that $u(0)$ and $v(0)$ are both nonzero. Suppose $Z: \K_{u}^{\perp} \to \K_{v}^{\perp}$ is unitary with $Z D_{u} Z^{*} = D_v$. From Proposition \ref{uuuuhhww} we have 
\begin{align*}
I|_{\K_{v}^{\perp}} - (1 - |u(0)|^2) Z u \otimes Z u & = Z D_{u} D_{u}^{*} Z^{*} = D_{v} D_{v}^{*}\\ &= I|_{\K_{v}^{\perp}} - (1 - |v(0)|^2) v \otimes v
\end{align*}
and it follows that 
$$(1 - |u(0)|^2) Z u \otimes Z u = (1 - |v(0)|^2) v \otimes v.$$
Apply both sides to the unit vector $v \in v H^2 \subset \K_{v}^{\perp}$ and observe that 
$$(1 - |u(0)|^2) \langle v, Z u\rangle Z u = (1 - |v(0)|^2) v$$
implying that $Z u = c v$ for some unimodular constant $c$ (because $u$ and $v$ are unit vectors and $Z$ is unitary). The previous equation yields $|u(0)| = |v(0)|$. 

Conversely, if $|u(0)| = |v(0)|$ then Theorem \ref{uuUUuuuv78778768768} yields
$D_{u} = W^{-1} D_v W$ where $$W = P^{-} + \frac{\overline{v(0)}}{\overline{u(0)}} v P^{+} \overline{u} \quad \mbox{and} \quad W^{-1} = P^{-} + \frac{\overline{u(0)}}{\overline{v(0)}} u P^{+} \overline{v} = W^{*}$$ since $\frac{v(0)}{u(0)} = \frac{\overline{u(0)}}{\overline{v(0)}}$. Therefore $W$ is unitary. 
\end{proof}

\section{Invariant subspaces}

%For the compressed shift $S_u$, the invariant subspaces are known \cite[p.~194]{MR3526203}.

%\begin{Theorem}
%Let $u$ be any inner function. For an inner function $v$ which divides $u$, the space 
%$v H^2 \cap \K_u$ is an invariant subspace for $S_u$. Furthermore, any $S_u$-invariant subspaces takes this form. 
%\end{Theorem}

%The invariant subspaces for $D_u$ are much richer. 

We begin our discussion  with a few general results. 

\begin{Proposition}\label{jjJjjllLll}
Let $u$ be any inner function.
A subspace $\mathscr{S} \subset \K_{u}^{\perp}$ is  $D_u$-invariant with $\mathscr{S} \subset u H^2$, or equivalently $P^{-} \mathscr{S} = \{0\}$, if and only if $\mathscr{S} = \gamma u H^2$ for some inner function $\gamma$.
\end{Proposition}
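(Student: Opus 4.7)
The plan is to handle the equivalent formulation first and then split the main equivalence into its two directions, relying on Corollary~\ref{1998764} and Beurling's theorem as the main tools.

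I would first dispense with the equivalence ``$\mathscr{S} \subset u H^2 \Leftrightarrow P^{-}\mathscr{S} = \{0\}$''. This is immediate from the orthogonal decomposition $\K_{u}^{\perp} = \overline{H^{2}_{0}} \oplus u H^{2}$ and the fact that $P^{-}$ is the orthogonal projection of $L^{2}$ onto $\overline{H^{2}_{0}}$: a vector in $\K_{u}^{\perp}$ lies in $u H^{2}$ precisely when its $\overline{H^{2}_{0}}$-component vanishes.

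For the ``if'' direction, suppose $\mathscr{S} = \gamma u H^{2}$ for some inner $\gamma$. Clearly $\mathscr{S} \subset u H^{2}$. By Corollary~\ref{1998764} the operator $D_{u}$ agrees with the shift $S$ on $u H^{2}$, so for $f \in \mathscr{S}$ one has $D_{u} f = z f \in \gamma u H^{2}$, using that $\gamma u H^{2}$ is $S$-invariant in $H^{2}$. Hence $\mathscr{S}$ is $D_{u}$-invariant.

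For the ``only if'' direction, suppose $\mathscr{S}$ is a $D_{u}$-invariant subspace of $\K_{u}^{\perp}$ contained in $u H^{2}$. By Corollary~\ref{1998764} again, $\mathscr{S}$ is invariant under $S$. Since $u$ is inner, $M_{\overline{u}}$ is an isometric isomorphism from $u H^{2}$ onto $H^{2}$, so $\mathscr{T} := \overline{u}\,\mathscr{S}$ is a closed subspace of $H^{2}$. This $\mathscr{T}$ is $S$-invariant: if $h \in \mathscr{T}$, then $u h \in \mathscr{S}$, so $z u h \in \mathscr{S}$, and therefore $z h = \overline{u}(z u h) \in \mathscr{T}$. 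By Beurling's theorem, either $\mathscr{T} = \{0\}$ or $\mathscr{T} = \gamma H^{2}$ for some inner function $\gamma$, which gives $\mathscr{S} = \gamma u H^{2}$ in the non-trivial case.

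There isn't really a major obstacle; the only minor point to flag is the zero subspace edge case, which falls outside the formulation ``$\mathscr{S} = \gamma u H^{2}$ for some inner $\gamma$'' and is most cleanly dealt with by restricting attention to non-zero $\mathscr{S}$ (or by explicitly noting the convention).
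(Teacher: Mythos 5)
Your proof is correct and follows essentially the same route as the paper: the forward direction via Corollary \ref{1998764}, and the converse via $S$-invariance plus Beurling's theorem. The only cosmetic difference is that you apply Beurling to $\overline{u}\,\mathscr{S}$ rather than to $\mathscr{S}$ itself (the paper gets $\mathscr{S}=\beta H^2$ directly and then deduces $u\mid\beta$ from $\beta H^2\subset uH^2$), and your remark about the zero-subspace edge case is a fair point that the paper glosses over.
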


\begin{proof}
If $\mathscr{S} = \gamma u H^2$, then $\mathscr{S} \subset u H^2$ \cite[p.~87]{MR3526203} and by Corollary \ref{1998764}, 
$D_{u} \mathscr{S} = z \mathscr{S} \subset \mathscr{S}$ and so $\mathscr{S}$ is $D_{u}$-invariant.  Conversely, when $\mathscr{S} \subset u H^2$ is a $D_u$-invariant, then, again by Corollary \ref{1998764}, $S \mathscr{S} \subset \mathscr{S}$. By Beurling's theorem, $\mathscr{S} = \beta H^2$ for some inner $\beta$. But $\beta H^2 \subset u H^2$ and so $\beta = \gamma u$.
\end{proof}

\begin{Lemma}\label{888UyT}
For a non-zero subspace $X \subset \overline{H^{2}_{0}}$ we have 
$X = \overline{z} \overline{\K_{\alpha}}$ for some inner $\alpha$ if and only if $P^{-}(z X) \subset X$ and $X \not = \overline{H^{2}_{0}}$. 
\end{Lemma}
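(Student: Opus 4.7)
My strategy is to transport the question from $\overline{H^{2}_{0}}$ to $H^{2}$, where it becomes a classification of $S^{*}$-invariant subspaces. The transport tool is the anti-linear involutive isometry
\[
C \colon L^{2} \to L^{2}, \qquad C f := \overline{z f},
\]
which maps $H^{2}$ bijectively onto $\overline{H^{2}_{0}}$ and satisfies $C \K_{\alpha} = \overline{z}\,\overline{\K_{\alpha}}$ for every inner $\alpha$. With this reformulation the conclusion reads: $X = C \K_{\alpha}$ for some inner $\alpha$ if and only if $P^{-}(z X) \subset X$ and $X \neq \overline{H^{2}_{0}}$.

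The first step is to verify the intertwining identity $Q C = C S^{*}$, where $Q g := P^{-}(z g)$ is the operator on $\overline{H^{2}_{0}}$ from Theorem \ref{uu6663455}. Expanding $f = \sum_{n \geq 0} a_{n} z^{n} \in H^{2}$ in Fourier series, both $Q C f$ and $C S^{*} f$ evaluate to $\sum_{k \leq -1} \overline{a_{-k}}\, z^{k}$, which establishes the identity. Because $C$ is an anti-linear isometric involution, it carries closed linear subspaces to closed linear subspaces, and the relation $S^{*}(CX) = C(QX)$ (obtained from $QC = CS^*$ together with $C^2 = I$) gives that $X$ is $Q$-invariant if and only if $CX$ is $S^{*}$-invariant.

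The second step is to invoke Beurling's theorem in its dual form: the closed $S^{*}$-invariant subspaces of $H^{2}$ are precisely $\{0\}$, $H^{2}$, and the model spaces $\K_{\alpha}$ for inner $\alpha$. The exclusions $X \neq \{0\}$ and $X \neq \overline{H^{2}_{0}}$ translate through $C$ into $CX \neq \{0\}$ and $CX \neq H^{2}$, so necessarily $CX = \K_{\alpha}$ for some non-constant inner $\alpha$. Applying $C$ once more gives $X = C \K_{\alpha} = \overline{z}\,\overline{\K_{\alpha}}$, which is the forward implication. The converse is immediate: if $X = C \K_{\alpha}$, then $Q X = C S^{*} \K_{\alpha} \subset C \K_{\alpha} = X$, and $X \neq \overline{H^{2}_{0}}$ because $\K_{\alpha} \neq H^{2}$ (as $\alpha H^{2} \neq \{0\}$ for any inner $\alpha$).

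The only place requiring care is the bookkeeping around the anti-linearity of $C$, namely that $CX$ really is a closed linear subspace and that $Q$-invariance transfers faithfully to $S^{*}$-invariance. If one prefers to sidestep anti-linearity, the linear unitary $W \colon \overline{H^{2}_{0}} \to H^{2}$ from the proof of Corollary \ref{bb77ujmkjhnb} plays the same role, at the cost of one extra identification of $W^{-1} \K_{\alpha}$ with a subspace of the form $\overline{z}\,\overline{\K_{\beta}}$ for a suitable inner $\beta$.
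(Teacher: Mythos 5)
Your proof is correct and follows essentially the same route as the paper's: both conjugate by $f \mapsto \overline{z f}$ to turn $X$ into $\overline{z}\,\overline{X} \subset H^2$, translate the condition $P^{-}(zX) \subset X$ into $S^{*}$-invariance of $\overline{z}\,\overline{X}$, and then invoke the Beurling-dual classification of proper non-zero $S^{*}$-invariant subspaces as the spaces $\K_{\alpha}$. The only cosmetic difference is that you make the conjugation and the intertwining $QC = CS^{*}$ explicit via Fourier coefficients, whereas the paper carries out the same computation through the identity $P^{-}(\overline{f}) = \overline{z P^{+}(\overline{z} f)}$.
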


\begin{proof}
Observe that $S^{*} f = P^{+}( \overline{z} f)$, $f \in H^2$, and so 
\begin{align*}
X & = \overline{z} \overline{\K_{\alpha}} \; \; \mbox{for some $\alpha$}\\
& \iff \overline{z} \overline{X} = \K_{\alpha} \; \; \mbox{for some $\alpha$}\\\
%& \iff S^{*} (\overline{z} \overline{X}) \subset \overline{z} \overline{X} \; \; \mbox{and $\overline{z} \overline{X} \not = \overline{H^{2}}$}\\
& \iff P^{+} (\overline{z} (\overline{z} \overline{X})) \subset  \overline{z} \overline{X} \; \; \mbox{and $\overline{z} \overline{X} \not = H^{2}$}\\
& \iff z P^{+} (\overline{z} (\overline{z} \overline{X})) \subset  \overline{X} \; \;  \mbox{and $X \not = \overline{H^{2}_{0}}$.}
\end{align*}
Using the identity
$P_{-}(\overline{f}) = \overline{z P^{+}(\overline{z} f)},$
 we see that 
\begin{align*}
z P^{+} (\overline{z} (\overline{z} \overline{X})) \subset  \overline{X}& \iff P^{-}(z X) \subset X \; \; \mbox{and $X \not = \overline{H^{2}_0}$}  \qedhere 
\end{align*}
\end{proof}

\begin{Lemma}\label{77yttwwii9}
Let $u$ be any inner function and $\mathscr{S} \subset \K_{u}^{\perp}$ be a $D_u$-invariant subspace. If $P^{-} \mathscr{S} \not = \{0\}$ then there is an $f_{-} \in P^{-}\mathscr{S}$  such that $\phi_{f_{-}}(0) \not = 0$. 
\end{Lemma}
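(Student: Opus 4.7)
The plan is to argue by contradiction. Suppose $\phi_{f_{-}}(0) = 0$ for every $f_{-} \in P^{-} \mathscr{S}$; by \eqref{ooopopoddGt666} this is equivalent to $\widehat{f_{-}}(-1) = 0$ for all such $f_{-}$. Fix any $g \in \mathscr{S}$ and write $g_{-} := P^{-} g$. The aim is to show $\widehat{g_{-}}(-n) = 0$ for every $n \geq 1$, forcing $g_{-} = 0$ and contradicting $P^{-} \mathscr{S} \neq \{0\}$.

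The computational heart of the argument is a clean formula for $P^{-}(D_{u} f)$. For any $f = f_{-} + f_{+} \in \K_{u}^{\perp}$, Proposition \ref{0982iu4hrfjdslkva} gives $D_{u} f = zf - \overline{\phi_{f}(0)} k_{0}^{u}$. Because $k_{0}^{u} = 1 - \overline{u(0)} u \in H^{2}$ we have $P^{-}(k_{0}^{u}) = 0$, and likewise $z f_{+} \in H^{2}$ gives $P^{-}(z f_{+}) = 0$. Hence
$$P^{-}(D_{u} f) = P^{-}(z f_{-}),$$
and by Lemma \ref{uUyyHHt}(i) this equals $\overline{\phi_{f}} - \overline{\phi_{f}(0)}$. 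In Fourier coordinates, if $f_{-} = \sum_{n \geq 1} \widehat{f_{-}}(-n) \overline{z}^{n}$, then $P^{-}(z f_{-}) = \sum_{n \geq 2} \widehat{f_{-}}(-n) \overline{z}^{n-1}$. Writing $T: \overline{H^{2}_{0}} \to \overline{H^{2}_{0}}$ for the operator $T f_{-} := P^{-}(z f_{-})$, this says $\widehat{T f_{-}}(-j) = \widehat{f_{-}}(-(j+1))$ for every $j \geq 1$; that is, $T$ is a backward shift in these coordinates, and an easy iteration gives $\widehat{T^{k} f_{-}}(-1) = \widehat{f_{-}}(-(k+1))$.

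Since $\mathscr{S}$ is $D_{u}$-invariant and $D_{u}$ carries $\K_{u}^{\perp}$ into itself, $D_{u}^{k} g \in \mathscr{S}$ for every $k \geq 0$, and a short induction using $P^{-}(D_{u} f) = T f_{-}$ yields $(D_{u}^{k} g)_{-} = T^{k} g_{-}$. Applying the contradiction hypothesis to each $T^{k} g_{-} \in P^{-} \mathscr{S}$ gives $\widehat{T^{k} g_{-}}(-1) = 0$, and the backward-shift identity then upgrades this to $\widehat{g_{-}}(-(k+1)) = 0$ for all $k \geq 0$. Hence $g_{-} = 0$, the desired contradiction. I expect the only place needing a sanity check is the bookkeeping identification $(D_{u}^{k} g)_{-} = T^{k} g_{-}$; it is entirely routine once $P^{-}(D_{u} f) = P^{-}(z f_{-})$ is in hand, with $D_{u}$-invariance of $\mathscr{S}$ supplying the fact that every iterate stays in $\mathscr{S}$ so that the hypothesis applies at each step.
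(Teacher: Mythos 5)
Your proposal is correct and follows essentially the same route as the paper: argue by contradiction, use Proposition \ref{0982iu4hrfjdslkva} to reduce $P^{-}(D_{u}f)$ to $P^{-}(zf_{-})$, and iterate to kill every Fourier coefficient of $f_{-}$. Your packaging of the iteration as a backward shift $T$ on $\overline{H^{2}_{0}}$ is just a cleaner bookkeeping of the paper's ``continuing in this manner'' step.
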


\begin{proof}
Suppose that for every $f_{-} \in P^{-} \mathscr{S} \setminus \{0\}$, with $f_{-} = P^{-} f$, $f \in \mathscr{S} \subset \K_{u}^{\perp}$, we have $\phi_{f}(0) = 0$. From Proposition \ref{0982iu4hrfjdslkva} and \eqref{ooopopoddGt666} we have 
$$P^{-}(D_{u} f) = P^{-}(z f_{-}) = z f_{-}$$ and so $z f_{-} \in  P^{-} \mathscr{S}$. Thus, by assumption, $z^2 f_{-} = \overline{\psi_{+}}$ with $\psi_{+} \in H^2$ and $\psi_{+}(0) = \phi_{f}'(0) = 0$. Therefore, 
$$P^{-}(D_{u}^{2} f) = P^{-}(z^2 f_{-}) = z^{2} f_{-} \in P^{-} \mathscr{S}.$$
Continuing in this manner we see that 
$$D_{u}^{n} f_{-} = z^n f_{-} \in \overline{H^{2}_{0}}, \quad n \geq 0,$$
which is impossible if $f_{-} \not = 0$. 
\end{proof}

These next two results further examine $P^{+} \mathscr{S}$ and $P^{-} \mathscr{S}$. 

\begin{Proposition}\label{9bgdlydndyd}
Let $u$ be any inner function and $\mathscr{S} \subset \K_{u}^{\perp}$ be a $D_u$-invariant subspace. Then one of the following three possibilities occurs: 
\begin{enumerate}
\item[(i)] $P^{-} \mathscr{S} = \{0\}$;
\item[(ii)] $P^{-} \mathscr{S} = \overline{H^{2}_{0}}$;
\item[(iii)] there is a non-constant inner function $\alpha$ such that $P^{-} \mathscr{S} = \overline{z} \overline{\K_{\alpha}}$.
\end{enumerate}
\end{Proposition}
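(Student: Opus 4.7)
The plan is to recognize $P^{-}\mathscr{S}$ as an invariant subspace for the operator $Q:\overline{H^{2}_{0}}\to\overline{H^{2}_{0}}$ given by $Qg=P^{-}(zg)$, and then appeal to the classification in Lemma~\ref{888UyT}.

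First I would use Proposition \ref{0982iu4hrfjdslkva} to compute, for any $f\in\mathscr{S}$,
$$
P^{-}(D_{u}f)=P^{-}(zf)-\overline{\phi_{f}(0)}\,P^{-}k_{0}^{u}=P^{-}(zf_{-})=Q(P^{-}f),
$$
where the middle equality uses $k_{0}^{u}=1-\overline{u(0)}u\in H^{2}$, so $P^{-}k_{0}^{u}=0$, and $P^{-}(zf_{+})=0$ since $zf_{+}\in H^{2}$. Because $\mathscr{S}$ is $D_{u}$-invariant, this forces $Q(P^{-}\mathscr{S})\subset P^{-}\mathscr{S}$; equivalently $P^{-}(z\,P^{-}\mathscr{S})\subset P^{-}\mathscr{S}$.

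Next I would apply Lemma~\ref{888UyT} to $X:=P^{-}\mathscr{S}\subset\overline{H^{2}_{0}}$. Since $X$ is $Q$-invariant, the lemma yields three mutually exclusive outcomes: $X=\{0\}$, giving case (i); $X=\overline{H^{2}_{0}}$, giving case (ii); or $X=\overline{z}\overline{\K_{\alpha}}$ for some inner $\alpha$, giving case (iii). In case (iii), $\alpha$ must be non-constant, for otherwise $\K_{\alpha}=\{0\}$ and hence $X=\{0\}$, which is case (i).

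The main technical subtlety I foresee is that Lemma~\ref{888UyT} is naturally stated for closed subspaces of $\overline{H^{2}_{0}}$, whereas the range $P^{-}\mathscr{S}$ of the projection applied to the closed subspace $\mathscr{S}$ is not obviously closed (the induced map $\mathscr{S}/(\mathscr{S}\cap uH^{2})\to\overline{H^{2}_{0}}$ is contractive but not evidently bounded below). I would circumvent this by first applying Lemma~\ref{888UyT} to the closure $\overline{P^{-}\mathscr{S}}$, which inherits $Q$-invariance by continuity, obtaining its classification; then I would invoke Lemma~\ref{77yttwwii9} --- which provides an $f_{-}\in P^{-}\mathscr{S}$ with $\phi_{f_{-}}(0)\neq 0$ whenever $P^{-}\mathscr{S}\neq\{0\}$ --- together with the $Q$-invariance of $P^{-}\mathscr{S}$ to produce enough explicit elements of $P^{-}\mathscr{S}$ to match the target subspace. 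I expect this closedness upgrade to be the principal obstacle, and the role of Lemma~\ref{77yttwwii9} in the preceding development strongly suggests this is how the authors address it.
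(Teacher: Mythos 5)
Your proof is correct and follows essentially the same route as the paper: one computes $P^{-}(D_{u}f)=P^{-}(zf_{-})$ for $f\in\mathscr{S}$ to conclude that $P^{-}\mathscr{S}$ is invariant under $Q$, and then applies Lemma~\ref{888UyT} to $X=P^{-}\mathscr{S}$. The closedness subtlety you flag is genuine but is not addressed in the paper either, which applies Lemma~\ref{888UyT} directly to $P^{-}\mathscr{S}$ without comment.
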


\begin{proof}
Let $f_{-} \in P^{-} \mathscr{S}$. Then there is an $f  = f_{-} + u \widetilde{f_{+}} \in \mathscr{S}$. Thus 
$$D_{u} f = z f_{-} - \overline{\phi_{f}(0)} + u (z \widetilde{f_{+}} + \overline{u(0)} \overline{\phi_{f}(0)})$$
and so
$P^{-}(D_u f) = z f_{-} - \overline{\phi_{f}(0)} = P^{-}(z f_{-}).$
Since $D_u f \in \mathscr{S}$, we have $P^{-}(z f_{-}) \in P^{-} \mathscr{S}$. Apply Lemma \ref{888UyT} to $X = P^{-} \mathscr{S}$ to obtain the result. 
\end{proof}

\begin{Proposition}\label{3857654749uU}
Let $u$ be any inner function and $\mathscr{S} \subset \K_{u}^{\perp}$ be a $D_u$-invariant subspace. If $u \in P^{+} \mathscr{S}$, then $P^{+} \mathscr{S} = u H^2$. 
\end{Proposition}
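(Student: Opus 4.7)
The plan is to leverage the hypothesis $u \in P^{+}\mathscr{S}$ together with $D_u$-invariance to show that $P^{+}\mathscr{S}$ contains $u z^n$ for every $n \geq 0$, after which density will give $P^{+}\mathscr{S} = u H^2$. The inclusion $P^{+}\mathscr{S} \subset u H^2$ is automatic from the orthogonal decomposition $\K_u^{\perp} = \overline{H^{2}_0} \oplus u H^2$, so only the reverse direction is at stake.

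The key computation: for any $f = f_- + u h \in \mathscr{S}$ with $f_- \in \overline{H^{2}_0}$ and $h \in H^2$, Proposition \ref{0982iu4hrfjdslkva} together with $k^{u}_{0} = 1 - \overline{u(0)} u$ gives
\begin{equation*}
D_u f = z f_- + z u h - \overline{\phi_f(0)} + \overline{\phi_f(0)}\,\overline{u(0)}\, u.
\end{equation*}
Applying $P^{+}$ and using Lemma \ref{uUyyHHt}(iv) to rewrite $P^{+}(z f_-) = \overline{\phi_f(0)}$, the two constant terms cancel, leaving the clean formula
\begin{equation*}
P^{+}(D_u f) = u\bigl( z h + \overline{\phi_f(0)}\,\overline{u(0)} \bigr).
\end{equation*}
So whenever $u h \in P^{+}\mathscr{S}$, there is a scalar $c$ (depending on $f$) with $u(z h + c) \in P^{+}\mathscr{S}$.

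Now I invoke the hypothesis $u \in P^{+}\mathscr{S}$: linearity of the subspace $P^{+}\mathscr{S}$ permits subtracting the scalar multiple $c u$, so $u h \in P^{+}\mathscr{S}$ forces $u \cdot z h \in P^{+}\mathscr{S}$. Starting from $u = u \cdot 1 \in P^{+}\mathscr{S}$ and iterating yields $u z^n \in P^{+}\mathscr{S}$ for every $n \geq 0$, and hence the $u$-multiples of all polynomials lie in $P^{+}\mathscr{S}$. Since these are dense in $u H^2$, one concludes $P^{+}\mathscr{S} = u H^2$. The one delicate point is that the passage from density to this set equality requires $P^{+}\mathscr{S}$ to be (treated as) closed, which is the standing convention of the paper on projected subspaces (compare Proposition \ref{9bgdlydndyd}, whose conclusion identifies $P^{-}\mathscr{S}$ with closed subspaces such as $\overline{z}\,\overline{\K_\alpha}$ or $\overline{H^{2}_0}$); everything else is a direct manipulation with results already in place.
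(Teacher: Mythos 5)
Your proof is correct and follows essentially the same route as the paper's: compute $P^{+}(D_u f)$ for $f = f_- + u h \in \mathscr{S}$ via Proposition \ref{0982iu4hrfjdslkva} and Lemma \ref{uUyyHHt}(iv), use the hypothesis $u \in P^{+}\mathscr{S}$ to strip off the resulting scalar multiple of $u$, and iterate to obtain $u z^{n} \in P^{+}\mathscr{S}$ for all $n \geq 0$ (the paper does $n=1,2$ explicitly and says ``analogously,'' whereas you phrase it as one uniform inductive step). The closedness caveat you flag at the end is equally implicit in the paper's own argument, so you are consistent with its conventions.
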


\begin{proof}
Let $f \in \mathscr{S}$ with $f = f_{-} + u$. We have $P^{+}(D_u f) \in P^{+} \mathscr{S}$ and
\begin{align*}
D_{u} f& = D_{u}(f_{-} + u) = z f_{-} + z u - \overline{\phi_{f}(0)} + \overline{\phi_{f}(0)} \overline{u(0)} u\\
& = (z f_{-} - \overline{\phi_{f}(0)}) + z u + \overline{\phi_{f}(0)} \overline{u(0)} u.
\end{align*}
Thus $P^{+} D_{u} f = z u + \overline{\phi_{f}(0)} \overline{u(0)} u$ and so 
$z u = P^{+} D_{u} f - \overline{\phi_{f}(0)} \overline{u(0)} u \in P^{+} \mathscr{S}.$
Now let $f_{1} \in \mathscr{S}$ be such that $f_{1} = f_{1-} + u z$ with $f_{1-} \in \overline{H^{2}_{0}}$. Then 
$P^{+} D_{u} f_{1} = u z^2 - \overline{\phi_{f_{1}}(0)} \overline{u(0)} u,$
and it follows that $u z^2 \in P^{+} \mathscr{S}$. Analogously we conclude that $z^{j} u \in P^{+} \mathscr{S}$ for all $j \geq 0$ and so, since $P^{+} \mathscr{S} \subset u H^2$, we have $P^{+} \mathscr{S} = u H^2$.
%\begin{align*}
%D_u f & = D_{u} (f_{-} + u)\\
%& = z f_{-} + z u - (1 - \overline{u(0)} u) \int_{\T} (z f_{-} + z u) dm\\
%& = \Big(z f_{-}  - \int_{\T} z f_{-} dm
%\Big) + z u + \overline{u(0)}u  \int_{\T} z f_{-} dm.
%\end{align*}
%Thus 
%$$P^{+} (D_u f) = z u + \overline{u(0)} \int_{\T} z f_{-} dm$$ and so 
%$$z u = P^{+}(D_u f) - \overline{u(0)} u \int_{\T} z f_{-} dm \in P^{+} \mathscr{S}.$$
%Now let $f_1 \in \mathscr{S}$ be such that $f_{1} = f_{1-} + u z$ with $f_{1-} \in \overline{H^{2}_{0}}$. Then 
%$$P^{+}(D_u f_1) = u z^2 + u \overline{u(0)} \int_{\T} z f_{1-} dm.$$ and it follows that $u z^2 \in P^{+} \mathscr{S}$. Analogously we conclude that $z^{j} u \in P^{+} \mathscr{S}$ for all $j \geq 0$ and so, since $P^{+} \mathscr{S} \subset u H^2$, we have $P^{+} \mathscr{S} = u H^2$. 
\end{proof}

\section{Invariant subspaces when $u(0) \not = 0$}\label{unotzero}

 Theorem \ref{uuUUuuuv78778768768} says that when $u(0) \not = 0$,  $D_u$ is similar to $M$ on $L^2$. Results of Wiener and Helson \cite{MR0171178} together describe the $M$-invariant subspaces $\mathcal{F}$ of $L^2$ as follows: If $M \mathcal{F} = \mathcal{F}$, then there is a measurable subset $A \subset \T$ such that $\mathcal{F} = \chi_{A} L^2$ while if $M \mathcal{F} \not = \mathcal{F}$, then $\mathcal{F} = w H^2$ for some $w \in L^{\infty}$ with $|w| = 1$ almost everywhere on $\T$. This yields the following. 

\begin{Theorem}
Suppose $u$ is inner, $u(0) \not = 0$, and $\mathscr{S}$ is a $D_u$-invariant subspace of $\K_{u}^{\perp}$. When $D_{u} \mathscr{S} = \mathscr{S}$  there is a measurable $A \subset \T$ for which 
$$\mathscr{S} = (P^{-} + u \overline{u(0)} P^{+}) \chi_{A} L^2.$$
When $D_{u} \mathscr{S} \not = \mathscr{S}$, then 
$$\mathscr{S} =  (P^{-} + u \overline{u(0)} P^{+}) w H^2,$$
 for some $w \in L^{\infty}$ with $|w| = 1$ almost everywhere on $\T$.
\end{Theorem}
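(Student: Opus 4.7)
The plan is to transport the classification of invariant subspaces from $M$ on $L^2$ back to $D_u$ on $\K_u^\perp$ via the similarity established in Theorem \ref{uuUUuuuv78778768768}. Since $V_u:\K_u^\perp\to L^2$ is a bounded invertible operator with $V_u D_u V_u^{-1}=M$, a closed subspace $\mathscr{S}\subset\K_u^\perp$ is $D_u$-invariant if and only if $\mathcal{F}:=V_u\mathscr{S}\subset L^2$ is $M$-invariant. Moreover, because $V_u$ is a bijection, $D_u\mathscr{S}=\mathscr{S}$ exactly when $M\mathcal{F}=\mathcal{F}$, and $D_u\mathscr{S}\subsetneq\mathscr{S}$ exactly when $M\mathcal{F}\subsetneq\mathcal{F}$. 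So the dichotomy in the statement matches the Wiener--Helson dichotomy perfectly.

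Next I would invoke the Wiener--Helson theorem: the $M$-invariant subspaces $\mathcal{F}$ of $L^2$ split into the \emph{reducing} (doubly-invariant) ones $\mathcal{F}=\chi_A L^2$ for some measurable $A\subset\T$, and the \emph{simply invariant} ones $\mathcal{F}=w H^2$ for some unimodular $w\in L^\infty$. Applying $V_u^{-1}$ and using the explicit formula $V_u^{-1}=P^{-}+u\,\overline{u(0)}\,P^{+}$ from \eqref{uub75chHyYYUUIO}, we obtain in the first case
\[
\mathscr{S}=V_u^{-1}\mathcal{F}=(P^{-}+u\,\overline{u(0)}\,P^{+})\chi_A L^2,
\]
and in the second case
\[
\mathscr{S}=V_u^{-1}\mathcal{F}=(P^{-}+u\,\overline{u(0)}\,P^{+})\,wH^2,
\]
which are exactly the two forms stated in the theorem.

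There is essentially no obstacle here beyond one bookkeeping check: we must confirm that $V_u^{-1}$ maps closed subspaces to closed subspaces and preserves the invariance dichotomy, which follows immediately from its boundedness and invertibility. The only mildly subtle point is that the $V_u^{-1}$-images are genuinely described as written (not merely up to closure), but since $V_u^{-1}$ is bounded with bounded inverse, the image of a closed subspace is already closed, so no closure is required. Thus the proof reduces to quoting Theorem \ref{uuUUuuuv78778768768} and the Wiener--Helson theorem, then reading off the form of $\mathscr{S}$ from the formula for $V_u^{-1}$.
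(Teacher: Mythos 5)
Your proposal is correct and is exactly the argument the paper uses: the paper derives this theorem directly from the similarity $V_u D_u V_u^{-1}=M$ of Theorem \ref{uuUUuuuv78778768768} together with the Wiener--Helson description of the $M$-invariant subspaces of $L^2$, pulling back via $V_u^{-1}=P^{-}+u\,\overline{u(0)}\,P^{+}$. Your added remarks that a bounded invertible operator carries closed subspaces to closed subspaces and preserves the dichotomy $D_u\mathscr{S}=\mathscr{S}$ versus $D_u\mathscr{S}\neq\mathscr{S}$ are the right (and only) bookkeeping points.
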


From $P^{-} + P^{+} = I$, we see that any $D_u$-invariant $\mathscr{S}$ takes the form 
$$\{g - k_{0}^{u} P^{+} g: g \in \mathcal{F}\},$$
where $\mathcal{F}$ is an $M$-invariant subspace of $L^2$.

Below are a few examples of
\begin{equation}\label{wwWwwW}
(P^{-} + u \overline{u(0)} P^{+})( wH^2)
\end{equation}
 for choices of inner  $u$ with $u(0) \not = 0$ and $w = \overline{\alpha} \beta$ for inner $\alpha$ and $\beta$. 

\begin{Example}
Let $u$ be inner with $u(0) \not = 0$. 
If $\alpha \equiv 1$ and $\beta$ is any inner function, then 
$$(P^{-} + u \overline{u(0)} P^{+})( \beta H^2) = u \beta H^2.$$
Observe how this connects to Proposition \ref{jjJjjllLll}.
\end{Example}

\begin{Example}
Let $u$ be inner with $u(0) \not = 0$. 
If $\beta \equiv 1$ and $\alpha$ is any inner function, then 
\begin{align*}
& (P^{-} + u \overline{u(0)} P^{+})(\overline{\alpha} H^2)\\
& = \{(P^{-} + u \overline{u(0)} P^{+})(\overline{\alpha} f_{+}): f_{+} \in H^2\}\\
& = \{(P^{-} + u \overline{u(0)} P^{+})(\overline{\alpha}(k + \alpha g_{+})): k \in \K_{\alpha}, g_{+} \in H^2\}\\
& = \{(P^{-} + u \overline{u(0)} P^{+})(\overline{\alpha} k + g_{+}): k \in \K_{\alpha}, g_{+} \in H^2\}.
\end{align*}
From Proposition \ref{978uwioejfskdlac} notice that for any $k \in \K_{\alpha}$ we have $\overline{\alpha} k \in \overline{H^{2}_{0}}$ and so 
$P^{-}(\overline{\alpha} k) = \overline{\alpha} k$ and $P^{+}(\overline{\alpha} k) = 0.$
Apply Proposition \ref{978uwioejfskdlac} to get 
$$
(P^{-} + u \overline{u(0)} P^{+})(\overline{\alpha} H^2)   = \overline{\alpha} \K_{\alpha} \oplus u H^2
 = \overline{z} \overline{\K_{\alpha}} \oplus u H^2.
$$
\end{Example}

\begin{Example}\label{0862387reiudf}
Let $\lambda \in \D \setminus \{0\}$ and 
$$u(z) = \alpha (z) = \frac{z - \lambda}{1 - \overline{\lambda} z}, \quad \beta(z) = z.$$
Then for any $f_{+} \in H^2$, 
\begin{align*}
& (P^{-} + u \overline{u(0)} P^{+})(\overline{\alpha} \beta f_{+})\\
 & = \big(P^{-} + \frac{z - \lambda}{1 - \overline{\lambda} z} (-\lambda) P^{+}\big)\big(\frac{1 - \overline{\lambda} z}{z - \lambda} z f_{+}\big)\\
& = P^{-}\Big(\frac{1 - \overline{\lambda} z}{z - \lambda} z f_{+}\Big) - \lambda \frac{z - \lambda}{1 - \overline{\lambda} z} P^{+}\Big(\frac{1 - \overline{\lambda} z}{z - \lambda} z f_{+}\Big)\\
& = \Big\{\frac{\overline{z}}{1 - \lambda \overline{z}} \lambda (1 - |\lambda|^2) f_{+}(\lambda) + \lambda \Big(z f_{+}  - \frac{\lambda (1 - |\lambda|^2)}{1 - \overline{\lambda} z} f_{+}(\lambda)\Big): f_{+} \in H^2\Big\}.
\end{align*}

%We now have 
%\begin{equation}\label{bBbbhh56tgsdhjfkgrf}
%\frac{1 - \overline{\lambda} z}{z - \lambda} z f_{+} =  \underbrace{\frac{(1 - \overline{\lambda} z) z f_{+} - (1 - |\lambda|^2) \lambda f_{+}(\lambda)}{z - \lambda}}_{\in H^2} + \underbrace{\frac{(1 - |\lambda|^2) \lambda f_{+}(\lambda)}{z - \lambda}}_{\in \overline{H^{2}_{0}}}.
%\end{equation}
%Thus 
%$$P^{-}\Big(\frac{1 - \overline{\lambda} z}{z - \lambda} z f_{+}\Big) = \frac{(1 - |\lambda|^2) \lambda f_{+}(\lambda)}{z - \lambda} = \frac{\overline{z}}{1 - \lambda \overline{z}} \lambda (1 - |\lambda|^2) f_{+}(\lambda).$$
%Since 
%$\K_{\alpha} = \C \frac{1}{1 - \overline{\lambda} z},$ we see from the previous line that 
%$$P^{-}\Big(\frac{1 - \overline{\lambda} z}{z - \lambda} z f_{+}\Big) \in \overline{z} \overline{\K_{\alpha}}.$$
%Moreover, from \eqref{bBbbhh56tgsdhjfkgrf} we see that 
%$$P^{+}\Big( \frac{1 - \overline{\lambda} z}{z - \lambda} z f_{+}\Big)= \frac{(1 - \overline{\lambda} z) z f_{+} - (1 - |\lambda|^2) \lambda f_{+}(\lambda)}{z - \lambda}.$$
%Putting this all together, we see from \eqref{333ee3EEE} that 
%$$(P^{-} + u \overline{u(0)} P^{+})(\overline{\alpha} \beta H^2)$$ is equal to 
%$$\Big\{\frac{\overline{z}}{1 - \lambda \overline{z}} \lambda (1 - |\lambda|^2) f_{+}(\lambda) + \lambda \Big(z f_{+}  - \frac{\lambda (1 - |\lambda|^2)}{1 - \overline{\lambda} z} f_{+}(\lambda)\Big): f_{+} \in H^2\Big\}.$$
The above is a proper subspace of $\overline{z} \overline{\K_{\alpha}} \oplus u H^2$. Indeed, $z - \lambda \in \overline{z} \overline{\K_{\alpha}} \oplus u H^2$ but there is no $f_{+} \in H^2$ for which 
$$z - \lambda = \frac{\overline{z}}{1 - \lambda \overline{z}} \lambda (1 - |\lambda|^2) f_{+}(\lambda) + \lambda \Big(z f_{+}  - \frac{\lambda (1 - |\lambda|^2)}{1 - \overline{\lambda} z} f_{+}(\lambda)\Big).$$ If there were such an $f_{+}$ then due to the uniqueness of orthogonal decomposition above then 
$f_{+}(\lambda) = 0$. This would mean that 
$z - \lambda = \lambda z f_{+}(z)$ for which there is no such $f_{+} \in H^2$.
\end{Example}

One can only go so far with these types of examples from \eqref{wwWwwW} since there are examples of unimodular $w$ which are not the quotient of two inner functions. 

%Our main result has a few interesting corollaries.

\begin{Corollary}\label{177654099}
Let $u$ be inner with $u(0) \not = 0$. If $\mathscr{S} \subset \K_{u}^{\perp}$, then $P^{-} V \mathscr{S} = P^{-} \mathscr{S}$. 
\end{Corollary}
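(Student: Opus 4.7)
The plan is to prove this by direct computation using the explicit decomposition of $\K_u^{\perp}$ together with the formula for $V_u$ given in \eqref{09iuhgiuyhgfuygf} (interpreting $V$ in the statement as $V_u$). The key observation is that $V_u$ is designed to "peel off" the inner factor $u$ from the $uH^2$ part of $\K_u^{\perp}$, while leaving the $\overline{H^2_0}$ part untouched. Since $P^-$ annihilates any $H^2$ function, the change from $u\widetilde{f_+}$ to $\widetilde{f_+}/\overline{u(0)}$ is invisible to $P^-$.

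First I would fix an arbitrary $f \in \K_u^{\perp}$ and invoke the decomposition \eqref{7765432qq} to write
\begin{equation*}
f = f_- + u\widetilde{f_+}, \qquad f_- \in \overline{H^2_0},\ \widetilde{f_+} \in H^2,
\end{equation*}
so that $P^- f = f_-$. Next I would apply $V_u = P^- + \frac{\overline{u}}{\overline{u(0)}} P^+$ to $f$. Since $P^- f = f_-$ and $P^+ f = u\widetilde{f_+}$, the factor $\overline{u}/\overline{u(0)}$ cancels the inner factor $u$, yielding
\begin{equation*}
V_u f = f_- + \frac{1}{\overline{u(0)}}\widetilde{f_+}.
\end{equation*}
Now applying $P^-$ to this identity annihilates the $H^2$ summand $\widetilde{f_+}/\overline{u(0)}$, leaving
\begin{equation*}
P^- V_u f = f_- = P^- f.
\end{equation*}

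Taking the image of $\mathscr{S}$ on both sides gives $P^- V_u \mathscr{S} = P^- \mathscr{S}$, which is the desired conclusion. There is no real obstacle here: the result is essentially a bookkeeping consequence of the definition of $V_u$, and the only point worth checking is that the hypothesis $u(0) \neq 0$ is used implicitly to make the factor $1/\overline{u(0)}$ well-defined (so that $V_u$ exists as an operator in the first place).
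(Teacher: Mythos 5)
Your proof is correct and follows essentially the same route as the paper: the key point in both is that $\frac{\overline{u}}{\overline{u(0)}}$ cancels the inner factor $u$ on the $uH^2$ component, so that term lands in $H^2$ and is killed by $P^{-}$. You package this as the single pointwise identity $P^{-}V_u f = P^{-}f$, whereas the paper verifies the two inclusions separately, but the computation is identical.
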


\begin{proof}
If $g_{-} \in P^{-} V \mathscr{S}$ there is an $h \in \mathscr{S}$ such that 
\begin{align*}
g_{-} &= P^{-} (P^{-} + \frac{\overline{u}}{\overline{u(0)}} P^{+}) h
 = P^{-} h + P^{-} \frac{\overline{u}}{\overline{u(0)}} P^{+} h\\
& = P^{-} h + P^{-} \frac{\overline{u}}{\overline{u(0)}} \underbrace{(u h_1)}_{P^{+} h \in u H^2}
 = P^{-} h.
\end{align*}
Thus $P^{-} V \mathscr{S} \subset P^{-} \mathscr{S}$.

Conversely, if $h_{-} \in P^{-} \mathscr{S}$, there exists an $h \in \mathscr{S}$ such that $h_{-} = P^{-} h$. Thus for 
$$g = \Big(P^{-} + \frac{\overline{u}}{\overline{u(0)}} P^{+}\Big) h \in V \mathscr{S},$$
we have $P^{-} g = h_{-}$. Thus $P^{-} \mathscr{S} \subset P^{-} V \mathscr{S}$. 
\end{proof}

\begin{Corollary}
Let $u$ be inner with $u(0) \not = 0$. If $\mathscr{S} \subset \K_{u}^{\perp}$ is a $D_u$-invariant subspace and $\{0\} \subsetneq P^{-} \mathscr{S} \subsetneq \overline{H^{2}_{0}}$, then $\mathscr{S} = V^{-1} (\overline{\alpha} \beta H^2)$ for two coprime inner functions $\alpha$ and $\beta$. 
\end{Corollary}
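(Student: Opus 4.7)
The plan is to transfer the problem to the bilateral shift via Theorem \ref{uuUUuuuv78778768768}: since $V D_u V^{-1} = M$ and $V$ is invertible, $V\mathscr{S}$ is a closed, $M$-invariant subspace of $L^2$, and by Corollary \ref{177654099} the hypothesis $\{0\} \subsetneq P^{-} \mathscr{S} \subsetneq \overline{H^{2}_{0}}$ transfers to $\{0\} \subsetneq P^{-} V\mathscr{S} \subsetneq \overline{H^{2}_{0}}$. The Wiener--Helson theorem (invoked just before the corollary statement) then gives one of two alternatives for $V\mathscr{S}$.

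First I would rule out the doubly invariant case $V\mathscr{S} = \chi_{A} L^2$. The possibility $|A| = 0$ forces $P^{-} V\mathscr{S} = \{0\}$, contrary to hypothesis, so suppose $|A| > 0$; I claim $P^{-}(\chi_{A} L^2) = \overline{H^{2}_{0}}$. Indeed, any $g \in \overline{H^{2}_{0}}$ orthogonal to this image satisfies $\langle g, \chi_{A} f\rangle = 0$ for all $f \in L^2$, hence $g = 0$ a.e.\ on $A$; but then $\overline{z}\, g \in H^2 \cap \overline{H^2}$ vanishes on a set of positive measure and is therefore identically zero. This also contradicts the hypothesis, so $V\mathscr{S} = w H^2$ for some unimodular $w \in L^\infty$.

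To identify $w$ as a quotient of inner functions, I would invoke Proposition \ref{9bgdlydndyd} applied to $\mathscr{S}$: since $\{0\} \subsetneq P^{-}\mathscr{S} \subsetneq \overline{H^{2}_{0}}$, only case (iii) is possible, so $P^{-}\mathscr{S} = \overline{z}\,\overline{\K_{\alpha}} = \overline{\alpha}\,\K_{\alpha}$ (using Proposition \ref{978uwioejfskdlac}) for some non-constant inner $\alpha$. By Corollary \ref{177654099} the same description holds for $P^{-} V\mathscr{S}$. The key calculation is that for any $h = wf \in wH^2$, writing $P^{-} h = \overline{\alpha}\, k$ with $k \in \K_{\alpha}$,
$$
P^{-}(\alpha h) \;=\; P^{-}\!\bigl(\alpha\,P^{-}h\bigr) + P^{-}\!\bigl(\alpha\,P^{+}h\bigr) \;=\; P^{-}(k) + 0 \;=\; 0,
$$
since $\alpha \overline{\alpha} = 1$ on $\T$ gives $\alpha P^{-}h = k \in \K_\alpha \subset H^2$, while $\alpha P^{+}h \in \alpha H^2 \subset H^2$. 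Hence $\alpha w H^2 \subset H^2$; this subspace is closed (multiplication by the unimodular $\alpha$ is an $L^2$-isometry), nonzero, and $M$-invariant, so Beurling's theorem gives $\alpha w H^2 = \beta H^2$ for some inner $\beta$ (after absorbing a unimodular constant into $\beta$). Therefore $V\mathscr{S} = \overline{\alpha}\beta H^2$ and $\mathscr{S} = V^{-1}(\overline{\alpha}\beta H^2)$; since $\overline{\alpha}\beta H^2$ is unchanged upon cancelling any common inner factor of $\alpha$ and $\beta$, we may take $\alpha$ and $\beta$ coprime.

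The main obstacle, I expect, is the key computation $\alpha V\mathscr{S} \subset H^2$: it requires precisely the shape of $P^{-} V\mathscr{S}$ granted by Proposition \ref{9bgdlydndyd}, together with the pointwise identity $\alpha \overline{\alpha} = 1$ on $\T$ and the containment $\K_\alpha \subset H^2$. Without this step one cannot bridge the gap between a generic unimodular $w$ coming out of Wiener--Helson and the quotient $\overline{\alpha}\beta$ of inner functions demanded by the conclusion.
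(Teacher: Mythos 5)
Your proof is correct and, at its core, is the same as the paper's: the decisive step --- using Proposition \ref{9bgdlydndyd} and Corollary \ref{177654099} to get $P^{-}V\mathscr{S} = P^{-}\mathscr{S} = \overline{\alpha}\,\K_{\alpha}$, multiplying by $\alpha$ to conclude $\alpha V\mathscr{S} \subset H^2$, and then applying Beurling's theorem to the $S$-invariant subspace $\alpha V\mathscr{S}$ --- is exactly the argument the paper gives. The opening detour through Wiener--Helson is harmless but redundant (once $\alpha V\mathscr{S}\subset H^2$ is known, Beurling does all the work and you never need $V\mathscr{S}=wH^2$ in advance), and there is a small slip in ruling out the doubly invariant case: for $g\in\overline{H^{2}_{0}}$ vanishing on a set of positive measure one should observe that $z\overline{g}\in H^2$ vanishes there, rather than that ``$\overline{z}\,g\in H^2\cap\overline{H^2}$''.
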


\begin{proof}
By Proposition \ref{9bgdlydndyd} we have $P^{-} \mathscr{S} = \overline{z} \overline{\K_{\alpha}}$ for some inner function $\alpha$ and by Corollary \ref{177654099}, $P^{-} V \mathscr{S} = P^{-} \mathscr{S} = \overline{\alpha} \K_{\alpha}$. Thus 
\begin{align*}
V \mathscr{S}  = (P^{-} + P^{+}) V \mathscr{S}
 \subset P^{-} V \mathscr{S} \oplus P^{+} V \mathscr{S}
 = \overline{\alpha} K_{\alpha} \oplus P^{+} V \mathscr{S}.
\end{align*}
Thus $\alpha V \mathscr{S} \subset \K_{\alpha} \oplus \alpha P^{+} V \mathscr{S} \subset H^2$. By Theorem  \ref{uuUUuuuv78778768768}, $\alpha V \mathscr{S}$ is an $S$-invariant subspace of $H^2$ which means that $\alpha V \mathscr{S} = \beta H^2$ for some inner function $\beta$. Dividing out by any common inner factors between $\alpha$ and $\beta$ we can assume that $\alpha$ and $\beta$ are coprime. Thus $\mathscr{S} = V^{-1}(\overline{\alpha} \beta H^2)$.
\end{proof}

%A contradiction argument with the previous corollary yields the following. 

\begin{Corollary}
Let $\mathcal{F}$ be an $M$-invariant subspace of $L^2$ that is not of the form $\overline{\alpha} \beta H^2$ for inner $\alpha$ and $\beta$. Then $\mathscr{S} = V^{-1} \mathcal{F}$ is a $D_u$-invariant subspace with $P^{-} \mathscr{S} = \overline{H^{2}_{0}}$.
\end{Corollary}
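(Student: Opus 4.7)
The plan is to exploit the similarity $V D_{u} V^{-1} = M$ from Theorem \ref{uuUUuuuv78778768768} to transfer $M$-invariance to $D_{u}$-invariance, and then apply the trichotomy of Proposition \ref{9bgdlydndyd} to pin down $P^{-}\mathscr{S}$. Since $V:\K_{u}^{\perp}\to L^2$ is a bounded bijection intertwining $D_{u}$ and $M$, the preimage $\mathscr{S} := V^{-1}\mathcal{F}$ of any $M$-invariant subspace $\mathcal{F}\subset L^2$ is automatically a closed $D_{u}$-invariant subspace of $\K_{u}^{\perp}$, from $D_{u}\mathscr{S} = V^{-1}MV\mathscr{S} = V^{-1}M\mathcal{F}\subset V^{-1}\mathcal{F} = \mathscr{S}$.

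Proposition \ref{9bgdlydndyd} then says $P^{-}\mathscr{S}$ must equal one of $\{0\}$, $\overline{H^{2}_{0}}$, or $\overline{z}\overline{\K_{\alpha}}$ for some non-constant inner $\alpha$. The strategy is to rule out the first and third alternatives by showing each of them forces $\mathcal{F}$ into the excluded form $\overline{\alpha}\beta H^2$. If $P^{-}\mathscr{S} = \{0\}$, Proposition \ref{jjJjjllLll} gives $\mathscr{S} = \gamma u H^2$ for some inner $\gamma$; using \eqref{llLKKkkKK}, on $uH^2$ the operator $V$ acts as $\frac{\overline{u}}{\overline{u(0)}} P^{+}$, so a one-line computation yields $V(\gamma u H^2) = \frac{1}{\overline{u(0)}}\gamma H^2 = \gamma H^2$ as a subspace, hence $\mathcal{F} = \overline{1}\cdot\gamma H^2$, contradicting the hypothesis (with $\alpha\equiv 1$, $\beta=\gamma$). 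If $P^{-}\mathscr{S} = \overline{z}\overline{\K_{\alpha}}$ for some non-constant inner $\alpha$, the corollary immediately preceding this statement directly gives $\mathscr{S} = V^{-1}(\overline{\alpha}\beta H^2)$ for coprime inner $\alpha,\beta$, so $\mathcal{F} = \overline{\alpha}\beta H^2$, again contradicting the hypothesis. Only $P^{-}\mathscr{S} = \overline{H^{2}_{0}}$ remains.

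There is no genuine obstacle here; the argument is essentially bookkeeping that combines the trichotomy of Proposition \ref{9bgdlydndyd} with the explicit classifications in Proposition \ref{jjJjjllLll} and the preceding corollary. The only small point worth double-checking is that $V(\gamma u H^2)$ really collapses to $\gamma H^2$ as a set, which follows at once since $1/\overline{u(0)}$ is a non-zero scalar and multiplying a subspace by a non-zero scalar preserves it.
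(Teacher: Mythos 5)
The paper states this corollary without proof, so there is no argument of its own to compare against; your proof is correct and is precisely the deduction the authors leave implicit: $D_u$-invariance of $V^{-1}\mathcal{F}$ via the similarity $V D_u V^{-1}=M$ of Theorem \ref{uuUUuuuv78778768768}, then elimination of the alternatives $P^{-}\mathscr{S}=\{0\}$ and $P^{-}\mathscr{S}=\overline{z}\,\overline{\K_{\alpha}}$ from Proposition \ref{9bgdlydndyd} by means of Proposition \ref{jjJjjllLll} and the immediately preceding corollary. Your side computation $V(\gamma u H^2)=\tfrac{1}{\overline{u(0)}}\gamma H^2=\gamma H^2$ is also correct.
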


\begin{Remark}
\hfill 
\begin{enumerate}
\item[(i)] The theorems in this section identify $P^{-} \mathscr{S}$ and $P^{+}\mathscr{S}$ separately. It is interesting that $\mathscr{S}$ can be a proper subset of $P^{-} \mathscr{S} \oplus P^{+} \mathscr{S}$ which seems to create a rich invariant subspace structure.
\item[(ii)] If $u(0) \not = 0$ and $\mathscr{S} \not = \{0\}$, we do not have $P^{+} \mathscr{S} = \{0\}$. Indeed, this would mean that $\mathscr{S} \subset \overline{H^{2}_{0}}$. However, for any $f_{-} \in \mathscr{S}$ we would have 
$$D_{u} f_{-} = z f_{-} - \overline{\phi_{f}(0)} + \overline{u(0)} \overline{\phi_{f}(0)} u \not \in \overline{H^{2}_{0}}$$
if $\phi_{f}(0) \not = 0$ (Lemma \ref{77yttwwii9}). 
\end{enumerate}
\end{Remark}

\begin{comment}
\begin{Remark}
So, thought one might be tempted to think when $u(0) \not = 0$, that every $D_{u}$-invariant subspace of $\K_{u}^{\perp}$ is $\overline{z} \overline{\K_{\alpha}} \oplus u H^2$ for some inner $\alpha$, this is not the case. 
\end{Remark}
\end{comment}

\section{Invariant subspaces when $u(0) = 0$}\label{uzero}

We characterized the $D_u$-invariant subspaces of $\K_{u}^{\perp}$ when $u(0) \not = 0$. We now discuss the $u(0) = 0$ case. 

\begin{comment}
From Proposition \ref{jjJjjllLll} and Proposition \ref{kkKkkKkKkkKK}, we know that when if $\mathcal{S} \subset \K_{u}^{\perp}$ is $D_u$-invariant and $P^{-} \mathcal{S} = \{0\}$, then $\mathcal{S} = \gamma u H^2$ for some inner function $\beta$. We also know that when $P^{-} \mathcal{S} \not = \{0\}$, then $P^{-} \mathcal{S} = \overline{z} \overline{\K_{\alpha}}$. We also have the following collection of $D_u$-invariant subspaces. 
\end{comment}

\begin{Proposition}\label{pPPoOOooo0098}
Let $u$ be inner with $u(0) = 0$. If $\alpha$ and $\gamma$ are inner then  $\overline{z} \overline{\K_{\alpha}} \oplus \gamma u H^2$ is a $D_u$-invariant subspace of $\K_{u}^{\perp}$.
\end{Proposition}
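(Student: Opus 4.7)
The plan is to use the simplified formula for $D_u$ available when $u(0) = 0$, together with the $S^*$-invariance of the model space $\K_\alpha$. Since $u(0) = 0$, the reproducing kernel at the origin collapses to $k_0^u = 1 - \overline{u(0)}u = 1$, so Proposition \ref{0982iu4hrfjdslkva} reduces to $D_u f = zf - \overline{\phi_f(0)}$ for every $f \in \K_u^{\perp}$. First I would check the harmless containment $\overline{z}\overline{\K_\alpha} \oplus \gamma u H^2 \subset \K_u^{\perp} = \overline{H^{2}_{0}} \oplus u H^2$; this is immediate because $\overline{z}\overline{\K_\alpha} \subset \overline{H^{2}_{0}}$ and $\gamma u H^2 \subset u H^2$, and orthogonality of the direct sum follows from $\overline{H^{2}_{0}} \perp H^2$.

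The main computation is then the following. Take an arbitrary $f = f_- + \gamma u h$ with $f_- \in \overline{z}\overline{\K_\alpha}$ and $h \in H^2$, and write $f_- = \overline{z}\overline{g}$ for a unique $g \in \K_\alpha$. Then $\phi_f = \overline{z}\overline{f_-} = g$ by \eqref{zZxXvV}, so $\phi_f(0) = g(0)$. Applying the reduced formula yields
\begin{align*}
D_u f &= z(\overline{z}\overline{g}) + z\gamma u h - \overline{g(0)} \\
&= \bigl(\overline{g} - \overline{g(0)}\bigr) + z\gamma u h.
\end{align*}
The second summand lies in $\gamma u H^2$ trivially. For the first, note that $\overline{g} - \overline{g(0)} = \overline{g - g(0)} = \overline{z \cdot S^{*}g} = \overline{z}\,\overline{S^{*}g}$. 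Because $\K_\alpha$ is $S^{*}$-invariant (it is a model space, hence an $S^{*}$-invariant subspace of $H^2$), we have $S^{*}g \in \K_\alpha$, and therefore $\overline{g} - \overline{g(0)} \in \overline{z}\overline{\K_\alpha}$. This shows $D_u f \in \overline{z}\overline{\K_\alpha} \oplus \gamma u H^2$, as required.

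There is no serious obstacle here; the only conceptual point is to notice that the constant correction $-\overline{\phi_f(0)}$ in Proposition \ref{0982iu4hrfjdslkva} is precisely what is needed to remove the zeroth Taylor coefficient produced by $z f_-$, so that the result stays inside $\overline{z}\overline{\K_\alpha}$ rather than acquiring an $H^2$-component outside $\gamma u H^2$. This is exactly what makes $u(0) = 0$ crucial: if $u(0) \neq 0$, the correction $-\overline{\phi_f(0)} k_0^u$ would have a nontrivial $u$-component that need not land in $\gamma u H^2$.
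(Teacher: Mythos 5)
Your proof is correct and follows essentially the same route as the paper's: apply Proposition \ref{0982iu4hrfjdslkva} (with $k_0^u = 1$ since $u(0)=0$), identify $\phi_f = g$ for $f_- = \overline{z}\overline{g}$ with $g \in \K_\alpha$, and use the $S^*$-invariance of $\K_\alpha$ to see that $\overline{g}-\overline{g(0)} = \overline{z}\,\overline{S^*g}$ stays in $\overline{z}\overline{\K_\alpha}$. The closing remark about why $u(0)=0$ matters is a nice addition but the core argument is identical.
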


\begin{proof}
Let $f = \overline{z} \overline{k} + \gamma u h$, where $k \in \K_{\alpha}$, $h \in H^2$. Proposition \ref{0982iu4hrfjdslkva} yields
\begin{align*}
D_{u}(\overline{z} \overline{k} + \gamma u h) & 
 = (\overline{k} - \overline{k(0)}) + z \gamma u h\\
& = \overline{z} \cdot \frac{\overline{k}  - \overline{k(0)}}{\overline{z}} + z \gamma u h \in \overline{z} \overline{\K_{\alpha}} + \gamma u H^2,
\end{align*}
where we took into account that $k \in \K_{\alpha} \implies \overline{z} (k - k(0)) \in \K_{\alpha}$. 
\end{proof}

%One can also see the above result by noting that $D_u$ is unitarily equivalent to $S \oplus S^{*}$ via Corollary \ref{bb77ujmkjhnb}. Indeed, consider  the $S \oplus S^{*}$-invariant subspace $\gamma H^2 \oplus \K_{\alpha}$ for inner $\gamma$ and $\alpha$ and follow the unitary operators from the proof of Corollary \ref{bb77ujmkjhnb} to produce the $D_u$-invariant subspace $\overline{z} \overline{\K_{\alpha}} \oplus \gamma u H^2$.

\begin{Proposition}\label{54433212}
Suppose $u$ is inner with $u(0) = 0$ and $\mathscr{S} \subset \K_{u}^{\perp}$ is $D_u$-invariant. Then either $P^{+} \mathscr{S} = \{0\}$ or $P^{+} \mathscr{S} = \gamma u H^2$ where $\gamma$ is inner.
\end{Proposition}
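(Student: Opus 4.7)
The plan is to show that $P^{+}\mathscr{S}$ is a closed $S$-invariant subspace of $uH^{2}$, and then to invoke Beurling's theorem. Two observations drive the argument. First, because $\K_{u}^{\perp} = \overline{H^{2}_{0}} \oplus uH^{2}$ and $P^{+}$ restricted to $\K_{u}^{\perp}$ is the orthogonal projection onto the $uH^{2}$ summand, we automatically have $P^{+}\mathscr{S} \subset uH^{2}$. Second, and more substantively, I would establish the identity
\[
P^{+}(D_{u} f) = z\, P^{+} f \qquad \text{for every } f \in \K_{u}^{\perp},
\]
which drives everything that follows.

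To verify the identity, note that when $u(0) = 0$ the reproducing kernel simplifies to $k_{0}^{u} = 1 - \overline{u(0)}\,u \equiv 1$, so Proposition \ref{0982iu4hrfjdslkva} reduces to $D_{u} f = zf - \overline{\phi_{f}(0)}$. Writing $f = f_{-} + u\widetilde{f_{+}}$ and applying $P^{+}$ gives $P^{+}(D_{u} f) = P^{+}(z f_{-}) + z u\widetilde{f_{+}} - \overline{\phi_{f}(0)}$, and Lemma \ref{uUyyHHt}(iv) identifies $P^{+}(z f_{-}) = \overline{\phi_{f}(0)}$. The constants cancel, leaving $z u\widetilde{f_{+}} = z P^{+} f$. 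The $D_{u}$-invariance of $\mathscr{S}$ then yields $z(P^{+}\mathscr{S}) \subset P^{+}\mathscr{S}$, so if $P^{+}\mathscr{S}$ is nonzero it is a nontrivial $S$-invariant subspace of $H^{2}$ sitting inside $uH^{2}$. Beurling's theorem produces an inner function $\beta$ with $P^{+}\mathscr{S} = \beta H^{2}$, and the inclusion $\beta H^{2} \subset u H^{2}$ forces $u \mid \beta$, so that $\beta = \gamma u$ for some inner $\gamma$, giving the desired conclusion.

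The main technical point I expect to confront is that the image of a closed subspace under an orthogonal projection need not itself be closed, so Beurling's theorem does not apply out of the box. The paper handles the analogous issue for $P^{-}\mathscr{S}$ in Proposition \ref{9bgdlydndyd}, and the same treatment should apply here: the $S$-invariance established above together with the iterative technique in Proposition \ref{3857654749uU} produces $z^{j}\gamma u \in P^{+}\mathscr{S}$ for every $j \geq 0$ once a suitable element is present, so $P^{+}\mathscr{S}$ contains all polynomial multiples of $\gamma u$, and together with the one-sided inclusion $P^{+}\mathscr{S} \subset \gamma u H^{2}$ this pins down $P^{+}\mathscr{S} = \gamma u H^{2}$.
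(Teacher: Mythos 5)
Your proof is essentially the paper's: you establish the identity $P^{+}(D_{u}f)=zu\widetilde{f_{+}}=zP^{+}f$ (valid because $k_{0}^{u}\equiv 1$ when $u(0)=0$ and $P^{+}(zf_{-})=\overline{\phi_{f}(0)}$ cancels the constant), conclude that $P^{+}\mathscr{S}$ is a non-zero $S$-invariant subspace of $uH^{2}$, and finish with Beurling's theorem and the divisibility $\beta H^{2}\subset uH^{2}\Rightarrow \beta=\gamma u$; this is exactly the computation and structure of the paper's proof. One remark on your final paragraph: the closedness of $P^{+}\mathscr{S}$ is a genuine subtlety, which the paper also passes over in silence, but your proposed repair does not settle it --- knowing that $P^{+}\mathscr{S}$ contains every $z^{j}\gamma u$ and is contained in $\gamma uH^{2}$ only shows that $P^{+}\mathscr{S}$ is dense in $\gamma uH^{2}$, and density together with a one-sided inclusion does not yield equality unless closedness is already known. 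So your argument is exactly as complete as the paper's; to get the stated equality (rather than equality of closures) one would need a separate argument that the projection of the closed subspace $\mathscr{S}$ is itself closed.
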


\begin{proof}
Let $P^{+} \mathscr{S} \not = \{0\}$ and $f = f_{-} + u \widetilde{f_{+}} \in \mathscr{S}$. Then 
$$P^{+}(D_u f) = u (z \widetilde{f_{+}} + \overline{u(0)} \overline{\phi_{f}(0)}) = z u \widetilde{f_{+}} \in P^{+} \mathscr{S}.$$
Thus $P^{+} \mathscr{S}$ (which is a subspace of $u H^2$) is a non-zero $S$-invariant subspace and thus, by Beurling's Theorem, $P^{+} \mathscr{S} = \gamma u H^2$ for some inner $\gamma$. 
\end{proof}

Proposition \ref{pPPoOOooo0098} does not describe all the $D_u$-invariant subspaces of $\K_{u}^{\perp}$. To get a better understanding where the complication lies, and since this is an interesting problem in its own right, let us cast this in an equivalent setting. From Corollary \ref{bb77ujmkjhnb}, a description of the $D_u$-invariant subspaces of $\K_{u}^{\perp}$ will yield a description of the $S \oplus S^{*}$-invariant subspaces of $H^2 \oplus H^2$. One can also check that the unitary operator that makes these two operators equivalent takes the $D_u$-invariant subspace $\gamma u H^2 \oplus \overline{z} \overline{\K_{\alpha}}$ to the $S \oplus S^{*}$-invariant subspace $\gamma u H^2 \oplus \K_{\alpha}$. However, these are not all of them. 

\begin{Example}\label{1072886tT}
For $a \in \D \setminus \{0\}$  consider the $S \oplus S^{*}$-invariant subspace generated by 
$$\frac{1}{1 - \overline{a} z} \oplus \frac{1}{1 - \overline{a} z},$$
that is, 
$$\bigvee\Big\{(S \oplus S^{*})^n \big(\frac{1}{1 - \overline{a} z} \oplus \frac{1}{1 - \overline{a} z}\big): n \geq 0\Big\}.$$
For any polynomial $p(z)$ we have 
$$p(S \oplus S^{*})\big(\frac{1}{1 - \overline{a} z} \oplus \frac{1}{1 - \overline{a} z}\big) = \frac{p(z)}{1 - \overline{a} z} \oplus \frac{p(\overline{a})}{1 - \overline{a} z}.$$
If $\{p_n\}_{n \geq 1}$ is a sequence of polynomials with 
$$p_n(S \oplus S^{*})\big(\frac{1}{1 - \overline{a} z} \oplus \frac{1}{1 - \overline{a} z}\big) \to f \oplus g$$ in $H^2 \oplus H^2$,  one can argue that 
$p_n(z) \to (1 - \overline{a} z) f$ in the norm of $H^2$ and thus
$p_{n}(\overline{a}) \to (1 - \overline{a}^2) f(\overline{a}).$
Thus
$$\bigvee\Big\{(S \oplus S^{*})^n \big(\frac{1}{1 - \overline{a} z} \oplus \frac{1}{1 - \overline{a} z}\big): n \geq 0\Big\} = \Big\{f \oplus \frac{f(\overline{a}) (1 - \overline{a}^2)}{1 - \overline{a} z}: f \in H^2\Big\}.$$ 
This subspace is contained in $H^2 \oplus K_{\alpha}$, where 
$$\alpha(z) = \frac{z - a}{1 - \overline{a} z},$$ but the containment is proper. Indeed, we have 
$$1 \oplus \frac{1}{1 - \overline{a} z} \in H^2 \oplus \K_{\alpha}.$$ However, 
$$1 \oplus \frac{1}{1 - \overline{a} z} \not \in \Big\{f \oplus \frac{f(\overline{a}) (1 - \overline{a}^2)}{1 - \overline{a} z}: f \in H^2\Big\}.$$ 
\end{Example}

This leads to the question: What are the invariant subspaces of $S \oplus S^{*}$?

%The key step in describing the $D_u$-invariant subspaces when $u(0) \not = 0$ was showing that $D_u$ is similar to $M_z$ on $L^2$. There can be no such result when $u(0) = 0$. To see this, note that the spectrum of $M_z$ on $L^2$ is $\T$ while the spectrum of $S \oplus S^{*}$ is $\overline{\D}$. Note that 
%$$0 \oplus \frac{1}{1 - \overline{a} z}, \quad a \in \D$$ are eigenvectors for $S \oplus S^{*}$ with eigenvalues $\overline{a}$. This also proves the first part of Proposition \ref{yiusyfisudfsa111}.

\section{Orthogonal sums}

A complicating factor is that for a $D_u$-invariant subspace $\mathscr{S}$ we may not have $P^{\pm} \mathscr{S} \subset \mathscr{S}$. We always have 
$\mathscr{S} \subset P^{-} \mathscr{S} \oplus P^{+} \mathscr{S}$ but Example \ref{0862387reiudf} shows this containment can be proper. Our main theorem  is the following.

\begin{Theorem}
Let $u$ be a inner and  $\mathscr{S}$ be a non-trivial $D_u$-invariant subspace of the form 
$\mathscr{S} = X_{-} \oplus Y_{+},$ where $X_{-}$ is a closed subspace of $\overline{H^{2}_{0}}$ and $Y_{+}$ is a closed subspace of $u H^2$.  
\begin{enumerate}
\item[(i)] If $u(0) \not = 0$, then $\mathscr{S}$ takes one of the forms: $\gamma u H^2$ or $\overline{z \K_{\alpha}} \oplus u H^2$, where $\gamma$ and $\alpha$ are inner. 
\item[(ii)] If $u(0) = 0$, then $\mathscr{S}$ takes  one of the following forms: $\overline{H^{2}_{0}}$, $\overline{z \K_{\alpha}}$, $\gamma u H^2$, $\overline{H^{2}_{0}} \oplus \gamma u H^2$,  or $\overline{z \K_{\alpha}} \oplus \gamma u H^2$, where $\gamma$ and $\alpha$ are inner. 
\end{enumerate}
\end{Theorem}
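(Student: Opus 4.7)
My plan is to translate the $D_u$-invariance of $\mathscr{S} = X_{-} \oplus Y_{+}$ into three independent conditions on the two summands, classify each summand separately using Beurling's theorem and Lemma \ref{888UyT}, and then combine them subject to a cross-constraint that is nontrivial only when $u(0) \neq 0$.

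First I would take a generic $f = f_{-} + u \widetilde{f_{+}}$ with $f_{-} \in X_{-}$ and $u \widetilde{f_{+}} \in Y_{+}$, and use Proposition \ref{0982iu4hrfjdslkva} together with $k_{0}^{u} = 1 - \overline{u(0)} u$ to rewrite $D_{u} f = \bigl(z f_{-} - \overline{\phi_{f}(0)}\bigr) + \bigl(z u \widetilde{f_{+}} + \overline{u(0)}\,\overline{\phi_{f}(0)}\, u\bigr)$, observing that the two parenthesised summands lie in $\overline{H^{2}_{0}}$ and $u H^2$ respectively. Because the decomposition $\mathscr{S} = X_{-} \oplus Y_{+}$ refines $L^2 = H^2 \oplus \overline{H^{2}_{0}}$, the inclusion $D_{u} \mathscr{S} \subset \mathscr{S}$ is equivalent to three separate requirements: (a) $P^{-}(z X_{-}) \subset X_{-}$; (b) $z Y_{+} \subset Y_{+}$, obtained by setting $f_{-} = 0$; and (c) $\overline{u(0)}\,\overline{\phi_{f}(0)}\, u \in Y_{+}$ for every $f_{-} \in X_{-}$. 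Conversely, (a), (b), and (c) are clearly sufficient.

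Next I would classify the summands. Beurling's theorem applied to (b) inside $u H^2$ forces $Y_{+} = \{0\}$ or $Y_{+} = \gamma u H^2$ for some inner $\gamma$, and Proposition \ref{9bgdlydndyd} (which uses Lemma \ref{888UyT} and reinstates the extreme cases excluded there) forces $X_{-} \in \{\{0\},\, \overline{H^{2}_{0}},\, \overline{z}\,\overline{\K_{\alpha}}\}$ for some inner $\alpha$. When $u(0) = 0$, condition (c) is automatic, so every pairing of admissible $X_{-}$ and $Y_{+}$ is $D_{u}$-invariant (sufficiency also being recorded in Propositions \ref{jjJjjllLll} and \ref{pPPoOOooo0098}); discarding the trivial pair $X_{-} = Y_{+} = \{0\}$ leaves precisely the five forms in (ii). When $u(0) \neq 0$ and $X_{-} \neq \{0\}$, Lemma \ref{77yttwwii9} produces an $f_{-} \in X_{-}$ with $\phi_{f}(0) \neq 0$; condition (c) then forces $u \in Y_{+} = \gamma u H^2$, so $\gamma$ is a unimodular constant and $Y_{+} = u H^2$. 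The pairing $X_{-} = \overline{H^{2}_{0}}$, $Y_{+} = u H^2$ yields the whole space $\K_{u}^{\perp}$ and is excluded by non-triviality, leaving the two forms in (i).

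The real work here is bookkeeping rather than a hard obstacle: I have to remember that Lemma \ref{888UyT} alone omits the degenerate cases $X_{-} = \{0\}$ and $X_{-} = \overline{H^{2}_{0}}$, both of which visibly satisfy (a) and must be treated as separate branches. The only sufficiency check not already on the page is for $\overline{z}\,\overline{\K_{\alpha}} \oplus u H^2$ in case (i), and that reduces to observing that $\overline{u(0)}\,\overline{\phi_{f}(0)}\, u \in u H^2$ trivially while $S^{*} \K_{\alpha} \subset \K_{\alpha}$ delivers (a) via Lemma \ref{888UyT}.
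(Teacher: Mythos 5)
Your proposal is correct and follows essentially the same route as the paper: the same expansion of $D_{u}f$ from Proposition \ref{0982iu4hrfjdslkva}, Lemma \ref{77yttwwii9} to produce $f_{-}$ with $\phi_{f}(0)\neq 0$ and hence $u\in Y_{+}$ when $u(0)\neq 0$, Lemma \ref{888UyT}/Proposition \ref{9bgdlydndyd} to classify $X_{-}$, and Beurling's theorem (via Propositions \ref{jjJjjllLll} and \ref{54433212}) to classify $Y_{+}$. Your repackaging of invariance into the three conditions (a)--(c) is a tidy reorganization that also records sufficiency, which the statement itself does not require, but the substance is identical to the paper's argument.
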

 
 \begin{proof}
 Proof of $(i)$. By Proposition \ref{jjJjjllLll} we see that if $X_{-} = \{0\}$, then $Y_{+} = \gamma u H^2$. On the other hand, if $X_{-} \not = \{0\}$ then by Lemma \ref{77yttwwii9}  there is an $f_{-} \in X_{-} \subset \mathscr{S}$  such that for
$\phi_{f}  = \overline{z} \overline{f_{-}}$ we have $\phi_{f}(0) \not = 0$. Furthermore, 
$D_{u} f_{-} = z f_{-} - \overline{\phi_{f}(0)} + \overline{u(0)} \overline{\phi_{f}(0)} u \in \mathscr{S}.$
Therefore, 
$$P^{-}(D_{u} f_{-}) = z f_{-} - \overline{\phi_{f}(0)} \in X_{-} \subset \mathscr{S}.$$
These equations imply that $u \in \mathscr{S}$. Proposition \ref{3857654749uU} implies $Y_{+} = u H^2$. Proposition \ref{9bgdlydndyd} says that either $X_{-}  = \overline{H^{2}_{0}}$, which yields 
$$\mathscr{S} = X_{-} \oplus Y_{+} = \overline{H^{2}_{0}} \oplus u H^2 = \K_{u}^{\perp}$$  or $X_{-} = \overline{z \K_{\alpha}}$, which implies $\mathscr{S} = \overline{z \K_{\alpha}} \oplus u H^2$. 

Proof of $(ii)$. Proposition \ref{54433212} says that either $Y_{+} = \{0\}$ or $Y_{+} = \gamma u H^2$ for some inner $\gamma$. Thus $\mathscr{S} = X_{-}$ or $\mathscr{S} = X_{-} \oplus \gamma u H^2$. Proposition \ref{9bgdlydndyd}  says that either $X_{-} = \{0\}$, $X_{-} = \overline{H^{2}_{0}}$, or $X_{-} = \overline{z \K_{\alpha}}$. 
 \end{proof}

\bibliographystyle{plain}

\bibliography{references}

\end{document}